\documentclass[11pt]{amsart}

\usepackage{texmaX,semmaX,semtkzX}
\usepackage{makecell}
\usepackage{multirow}

\usepackage{booktabs} 
\usepackage{mathtools}
\usepackage{amssymb}
\usepackage{listings}

\usepackage{verbatim}  
\usepackage{enumitem}

\usepackage{hyperref}

\usepackage{float}

\usepackage{lmodern}
\usepackage{minted} 

\usepackage{listings}
\usepackage[T1]{fontenc}

\usepackage[table]{xcolor}

\usepackage[margin=1in]{geometry}

\newtheorem*{unnumberedquestion}{Question}

\usepackage{amsthm}

\DeclareFontFamily{U}{wncy}{}
\DeclareFontShape{U}{wncy}{m}{n}{<->wncyr10}{}
\DeclareSymbolFont{mcy}{U}{wncy}{m}{n}
\DeclareMathSymbol{\Sha}{\mathord}{mcy}{"58}

\DeclareMathOperator{\Ker}{Ker}

\usepackage{tikz}
\usetikzlibrary{arrows.meta}
\usetikzlibrary{arrows.meta}
\usetikzlibrary{positioning, arrows.meta}

\usepackage[utf8]{inputenc}

\usepackage{placeins}

\usepackage{float}
\usepackage{caption}

\usepackage{xcolor}
\usepackage{ulem}

\begin{document}

\title{When the BSD Real Period Remembers Elliptic Curves over $\mathbb{Q}$}

\author{Asuka Shiga}
\address{Mathematical Institute, Graduate School of Science, Tohoku University, 6-3 Aramaki Aza Aoba, Aoba-ku, Sendai, Miyagi 980-8578, Japan.}
\email{shiga.asuka.r5@dc.tohoku.ac.jp}

\begin{abstract}
We study the extent to which the invariants appearing in the Birch and Swinnerton-Dyer conjecture remember the elliptic curve, and we give several affirmative answers to this question. For example, the real period $\Omega_E
=\frac{\Gamma(1/4)^2}{2\sqrt{\pi}\,17^{1/4}}$ and the Tate--Shafarevich group $
\Sha(E/\mathbb{Q})\cong \mathbb{Z}/2\mathbb{Z}\times \mathbb{Z}/2\mathbb{Z}$ remember the elliptic curve
$E: y^2=x^3+17x$. Let $K$ be a number field admitting a real embedding, and fix one such embedding $\sigma\colon K\hookrightarrow\mathbb{R}$. We first prove that, for an elliptic curve $E$ over $K$, the real period of $E$ attached to $\sigma$ determines the $K$-isogeny class of $E$. We then prove that an elliptic curve $E$ over $\mathbb{Q}$ is determined up to isomorphism over $\mathbb{Q}$ by its real period, the number of connected components of $E(\mathbb{R})$, and the real period of the quadratic twist $E^{D}$ by a negative square-free integer $D\equiv 1 \bmod 4$ coprime to the minimal discriminant of $E$. Moreover, over $\mathbb{Q}$, both statements already hold when the real periods are known only to sufficiently many decimal places, where the required precision depends on $E$ (and $D$).
\end{abstract}

\keywords{Real period, isogeny}

\maketitle

\setcounter{tocdepth}{2}
\tableofcontents

\section{Introduction}

We ask to what extent the invariants appearing in the Birch and Swinnerton-Dyer conjecture remember the elliptic curve itself.

\begin{unnumberedquestion}
Can an elliptic curve be recovered up to isomorphism over $\mathbb{Q}$ from the invariants appearing in the BSD conjecture?
\end{unnumberedquestion}

For example, what is the elliptic curve over $\Bbb{Q}$ which has the data \[\Omega_E
=
\frac{\Gamma(1/4)^2}{2\sqrt{\pi}\,17^{1/4}},
\Sha(E/\mathbb{Q})\cong \mathbb{Z}/2\mathbb{Z}\times \mathbb{Z}/2\mathbb{Z} \ ?\] 

Here, $\Omega_E$ denotes the real period of $E$, $\Sha(E/\mathbb{Q})$ denotes the Tate--Shafarevich group of $E/\mathbb{Q}$, and $\Gamma$ denotes the gamma function.

The answer is unique and is given in Example \ref{prop:j1728}.

In contrast, one can find infinitely many pairs of non-isomorphic quadratic twists sharing all BSD invariants; we call such pairs “BSD twins” \cite{shiga}.

The aim of this paper is to present an affirmative aspect of this question.

The equality of $L$ functions is equivalent, by Faltings's theorem, to the existence of an isogeny between the curves, and hence imposes a strong constraint. On the other hand, some of the individual invariants appearing in the BSD formula, such as the Tate--Shafarevich group, by themselves carry much less information about the isomorphism class of the curve. We focus on the real period and prove that it has the following reconstruction property: it determines the $\mathbb{Q}$-isogeny class, and, together with the number of real connected components and the real period of a suitable negative quadratic twist, it determines the $\mathbb{Q}$-isomorphism class of the elliptic curve.

First, as a consequence of Wüstholz’s analytic subgroup theorem, we prove the following. This theorem is a corollary of Theorem \ref{periodmain}, the number-field version proved in Section 2.

\begin{theorem}
Let \(E_1\) and \(E_2\) be elliptic curves over \(\mathbb{Q}\).
\begin{enumerate}

\item If their real periods satisfy
\[
\frac{\Omega_{E_1}}{\Omega_{E_2}} \in \mathbb{Q}^{\times},
\]
then \(E_1\) and \(E_2\) are \(\mathbb{Q}\)-isogenous.

\item Let $a \in \mathbb{Q}_{>0}$ and let $E_1/\mathbb{Q}$ be an elliptic curve.
Then there exists $\varepsilon(E_1,a)>0$ such that, for every elliptic curve
$E_2/\mathbb{Q}$, if
\[
\left|
\frac{\Omega_{E_1}}{\Omega_{E_2}}-a
\right|
<\varepsilon(E_1,a),
\]
then $E_1$ and $E_2$ are isogenous over $\mathbb{Q}$.
\end{enumerate}
\end{theorem}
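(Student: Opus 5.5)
Part (1) follows from Wüstholz's analytic subgroup theorem applied to $A=E_1\times E_2$, a commutative algebraic group over $\overline{\mathbb{Q}}$. Fix Néron (minimal) differentials $\omega_i$ on $E_i$ and write $\Omega_{E_i}=c_i\int_{\gamma_i}\omega_i$. Here $\gamma_i$ generates the real part of the period lattice $\Lambda_i$, and $c_i\in\{1,2\}$ is the number of real components. Set $u_i=\int_{\gamma_i}\omega_i\in\Lambda_i$. Then $u=(u_1,u_2)\in \mathrm{Lie}(A)(\mathbb{C})\cong\mathbb{C}^2$ satisfies $\exp_A(u)=0\in A(\overline{\mathbb{Q}})$. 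If $\Omega_{E_1}/\Omega_{E_2}=r\in\mathbb{Q}^\times$, then $u_1=r'u_2$ with $r'=rc_2/c_1\in\mathbb{Q}^\times$. So $u$ lies in the $\overline{\mathbb{Q}}$-rational line $W=\{(z_1,z_2): z_1=r'z_2\}$, taken in the coordinates dual to $\omega_1,\omega_2$, which are defined over $\mathbb{Q}$.

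By the analytic subgroup theorem, the smallest $\overline{\mathbb{Q}}$-rational subspace containing $u$ is $\mathrm{Lie}(B)$ for some connected algebraic subgroup $B\subseteq A$ with $u\in\mathrm{Lie}(B)$. Since $u\neq0$ and $u$ lies in the one-dimensional space $W$, we get $\mathrm{Lie}(B)=W$. Thus $B$ is an elliptic curve in $E_1\times E_2$. Neither of its projections is trivial, because $W$ is not an axis. Therefore $B$ projects isogenously onto both factors, which gives an isogeny $E_1\to E_2$ over $\overline{\mathbb{Q}}$. To descend to $\mathbb{Q}$, note that $W$ is defined over $\mathbb{Q}$, so $B$ is $\mathrm{Gal}(\overline{\mathbb{Q}}/\mathbb{Q})$-stable. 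This holds because Galois conjugates of $B$ are abelian subvarieties with Lie algebra $W^\sigma=W$, and a connected subgroup is determined by its Lie algebra. Hence $B$ is defined over $\mathbb{Q}$ and the projections are $\mathbb{Q}$-isogenies. The main subtle point here is this descent: the theorem is usually stated over $\overline{\mathbb{Q}}$, so we must ensure the rationality of $W$ over $\mathbb{Q}$. This is why we use the real-period normalization with rational factors $c_i$.

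Part (2) follows from part (1) and a discreteness argument. The key input is finiteness: for any bound $M$, only finitely many $\mathbb{Q}$-isomorphism classes of $E/\mathbb{Q}$ have $\Omega_E\ge 1/M$. To prove this, write $\Omega_E$ in terms of the minimal model. The standard estimate $\log\Omega_E \approx -\tfrac{1}{12}\log|\Delta_{\min}| + O(\log(1+h(j)))$, or equivalently via the Faltings height $h_F(E)=-\tfrac12\log(\text{covolume})+\tfrac1{12}\log|\Delta_{\min}|$, shows that $\Omega_E\to0$ as the naive height of the minimal model tends to infinity. Since there are only finitely many minimal models of bounded height, finiteness follows. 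In particular, the set $\{\Omega_E\}$ has no accumulation point in $(0,\infty)$. We apply this with $M$ such that $|\Omega_{E_1}/\Omega_{E_2}-a|<a/2$ forces $\Omega_{E_2}<2\Omega_{E_1}/a$ and $\Omega_{E_2}>2\Omega_{E_1}/(3a)$. The curves $E_2$ satisfying the lower bound form a finite set $S$.

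Now choose $\varepsilon(E_1,a)$. For the finitely many $E_2\in S$ that are not $\mathbb{Q}$-isogenous to $E_1$, part (1) gives $\Omega_{E_1}/\Omega_{E_2}\ne a$, since $a$ is rational. Let $\varepsilon$ be the minimum of $a/2$ and these finitely many positive numbers $|\Omega_{E_1}/\Omega_{E_2}-a|$. Then any $E_2$ within $\varepsilon$ lies in $S$ and must be isogenous to $E_1$. The main obstacle is making the finiteness step rigorous, namely the lower bound of $\Omega_E$ decaying with the discriminant. I would handle it via the Faltings height together with Northcott finiteness for the Faltings height over $\mathbb{Q}$. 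Note that $\Omega_E$ and the Faltings height differ only by the bounded factor $c_E$ and the ratio $\mathrm{Im}(\tau)$ structure. This requires controlling $\mathrm{covol}(\Lambda)$ versus $\Omega_E\cdot$(imaginary period). That in turn uses $\mathrm{covol}=\Omega^+\Omega^-$ up to a factor $\le2$ and the bound $\Omega^-\gg\Omega^+$ up to logarithmic factors in $h(j)$. If this is cumbersome, I would instead bound $\Omega_E$ directly from the explicit AGM formula in terms of the roots of the minimal Weierstrass cubic.
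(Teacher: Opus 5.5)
Your proposal follows the paper's proof essentially step for step. Part (1) is the argument of Proposition~\ref{commonperiod} and Theorem~\ref{periodmain}: a common nonzero real period, the analytic subgroup theorem on $E_1\times E_2$ giving a connected subgroup whose Lie algebra is the $\mathbb{Q}$-rational line through $u$, and Galois descent because a connected subgroup is determined by its Lie algebra. Part (2) is the finiteness-plus-discreteness argument of Lemma~\ref{lonely} and Theorem~\ref{rounded}.

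One caution on the finiteness step, which the paper settles by citing $\Omega_E\ll H(E)^{-1/12}\log H(E)$. Your heuristic $\log\Omega_E\approx-\frac{1}{12}\log|\Delta_{\min}|+O(\log(1+h(j)))$ is false as a two-sided estimate. The true size is roughly $-\frac{1}{12}\log\max(|c_4|^3,c_6^2)$, which is much smaller when $|j|$ is large. So rely instead on the Faltings-height/Northcott or explicit AGM route you sketch at the end; either one gives the needed finiteness.
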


The version of part (1) asserting isogeny over the algebraic closure $\overline{\mathbb Q}$ appears in Exercise~8.8 of \cite{lecture} and Corollary~1 of \cite{mova}. However, an isogeny over $\overline{\mathbb Q}$ need not descend to $\mathbb Q$. We give a detailed proof of part (1), including the descent to $\mathbb Q$ needed for the reconstruction problem considered here.

\begin{corollary}
Assume the Birch and Swinnerton-Dyer conjecture. If $\mathrm{Reg}(E_1/\Bbb{Q})=\mathrm{Reg}(E_2/\Bbb{Q})$, and the leading terms of the $L$-functions of $E_1/\Bbb{Q}$ and $E_2/\Bbb{Q}$ coincide, then $E_1$ and $E_2$ are isogenous over $\Bbb{Q}$.
\end{corollary}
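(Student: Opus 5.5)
The plan is to use the BSD formula to turn the two equalities into a rational relation between the real periods, and then apply part (1) of the preceding theorem. Under BSD, for $i=1,2$ with $r_i=\operatorname{rank}E_i(\mathbb{Q})$, the leading coefficient satisfies
\[
L^{*}(E_i,1)=\frac{\Omega_{E_i}\,\mathrm{Reg}(E_i/\mathbb{Q})\,\#\Sha(E_i/\mathbb{Q})\,\prod_p c_p(E_i)}{\#E_i(\mathbb{Q})_{\mathrm{tors}}^{2}},
\]
where $L^{*}(E_i,1)=\lim_{s\to1}L(E_i,s)/(s-1)^{r_i}$. Part of the conjecture is that $\Sha(E_i/\mathbb{Q})$ is finite, so every factor other than $\Omega_{E_i}$ and $\mathrm{Reg}(E_i/\mathbb{Q})$ is a positive integer or the square of one.

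First I take the quotient of the two formulas. The hypotheses give $L^{*}(E_1,1)=L^{*}(E_2,1)$ and $\mathrm{Reg}(E_1/\mathbb{Q})=\mathrm{Reg}(E_2/\mathbb{Q})$. To divide by them I need both to be nonzero. The regulator is positive by the nondegeneracy of the Néron--Tate height pairing on $E(\mathbb{Q})/E(\mathbb{Q})_{\mathrm{tors}}$, with the convention $\mathrm{Reg}=1$ in rank $0$. The leading term is nonzero by definition, and the BSD formula confirms this because its right-hand side is a product of positive quantities.

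Dividing then gives
\[
\frac{\Omega_{E_1}}{\Omega_{E_2}}=\frac{\#\Sha(E_2/\mathbb{Q})\,\prod_p c_p(E_2)\,\#E_1(\mathbb{Q})_{\mathrm{tors}}^{2}}{\#\Sha(E_1/\mathbb{Q})\,\prod_p c_p(E_1)\,\#E_2(\mathbb{Q})_{\mathrm{tors}}^{2}}\in\mathbb{Q}^{\times}.
\]
Part (1) of the theorem now shows that $E_1$ and $E_2$ are $\mathbb{Q}$-isogenous.

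The main point needing care is that the leading terms are compared as the coefficients $L^{*}(E_i,1)$, even when the ranks $r_1$ and $r_2$ might differ. This causes no problem, because the argument never uses the ranks and only uses the leading coefficient itself. A posteriori, the isogeny forces $r_1=r_2$. The substantive input is entirely the earlier theorem, which rests on Wüstholz's analytic subgroup theorem together with descent to $\mathbb{Q}$.
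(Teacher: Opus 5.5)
Your proposal is correct and follows the paper's route: under BSD, the equal leading terms and equal regulators force $\Omega_{E_1}/\Omega_{E_2}\in\mathbb{Q}^{\times}$, and part (1) of the theorem then gives a $\mathbb{Q}$-isogeny. You additionally spell out what the paper's two-line proof leaves implicit, namely the nonvanishing of the regulator and of the leading term and the finiteness of $\Sha$.
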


\begin{proof}
Since the leading terms of the $L$-functions agree, the ratio of the real periods is rational. Hence $E_1$ and $E_2$ are isogenous over $\Bbb{Q}$.\end{proof}

Thanks to this theorem, we can characterize elliptic curves over $\mathbb{Q}$ in terms of their real periods and other BSD invariants. For example, the following result shows that the elliptic curve $E:y^2=x^3+17x$
is determined solely by its real period and Tate--Shafarevich group.
The curve $E$ is the Jacobian of the classical Lind--Reichardt curve $2y^2=x^4-17$, which gives the first known counterexample to the Hasse principle.

\begin{example}\label{prop:j1728}
There is a unique $\mathbb{Q}$-isomorphism class of elliptic curves
$E/\mathbb{Q}$ whose real period and Tate--Shafarevich group are given by
\[
\Omega_E
=
\frac{\Gamma(1/4)^2}{2\sqrt{\pi}\,17^{1/4}},
\Sha(E/\mathbb{Q})
\cong
\mathbb{Z}/2\mathbb{Z}\times \mathbb{Z}/2\mathbb{Z}.
\]

A direct computation shows that \(E: y^2=x^3+17x\) has the given data $\Omega_E, \Sha(E/\Bbb{Q})$. Indeed, we have $\Omega_E
=
\frac{\Gamma(1/4)^2}{2\sqrt{\pi}\,17^{1/4}}, \Sha(E/\mathbb{Q})
\cong
\mathbb{Z}/2\mathbb{Z}\times \mathbb{Z}/2\mathbb{Z}$. For the computation of the Tate--Shafarevich group, see \cite{Rubin} for the odd part. For the even part, a 2-descent shows $\Sha(E/\Bbb{Q})[2]=(\Bbb{Z}/2\Bbb{Z})^2$ and $\Sha(E/\Bbb{Q}(\sqrt{-2}))[2]=0$. Then, by Proposition 5.6 (2) of \cite{behavior}, we have $\Sha(E/\Bbb{Q})[2^{\infty}]=(\Bbb{Z}/2\Bbb{Z})^2$.

Let \(E'\) be another elliptic curve over \(\Bbb{Q}\). Then \(E\) and \(E'\) are isogenous over \(\Bbb{Q}\) since they share the same real period. Then \(E'\) is \(2\)-isogenous to \(E\) over \(\Bbb{Q}\) (by the isogeny graph of LMFDB \cite{lmfdb}).

So \(E'\) is isomorphic to \(y^2=x^3-68x\). However, a 2-descent computation in SageMath shows that $\Sha(E'/\mathbb{Q})[2]=0$. Thus, the curve is unique, namely \(E\). \hfill $\lozenge$

\end{example}

Second, we prove the following theorem. For an elliptic curve $E/\mathbb Q$ and a square-free integer $D$, let $E^D$ denote the quadratic twist of $E$ by $D$, and let $\Omega_E$ denote the real period of $E$.

The following gives a sufficient condition under which another isogenous candidate $E'$ is in fact $\mathbb{Q}$-isomorphic to $E$. In other words, it shows that $E/\mathbb{Q}$ can be recovered solely from data intrinsic to $E$.

\begin{theorem}(=Theorem \ref{recover})
Let $E/\mathbb{Q}$ be an elliptic curve, and let $D<0$ be a
square-free integer satisfying $$
  D \equiv 1 \bmod{4}
  \qquad\text{and}\qquad
  \gcd(D,\Delta_E)=1,
$$ where $\Delta_E$ denotes the minimal discriminant of $E$.
Then the following statements hold.
\begin{enumerate}
\item
The $\mathbb{Q}$-isomorphism class of $E$ is uniquely determined by the data

$$
  \left(
    c_{E/\mathbb{R}},
    \Omega_E,
    \Omega_{E^D}
  \right),
$$ where $c_{E/\mathbb{R}}$ denotes the number of connected components of
$E(\mathbb{R})$.

\item
There exists an integer $N=N(E,D)\geq 1$ such that the
$\mathbb{Q}$-isomorphism class of $E$ is uniquely determined by the data

$$
  \left(
    c_{E/\mathbb{R}},
    \Omega_E \text{ rounded to } N \text{ decimal places},
    \Omega_{E^D} \text{ rounded to } N \text{ decimal places}
  \right).
$$

Here, for $x>0$, the value of $x$ rounded to $N$ decimal places is $10^{-N}\left\lfloor 10^N x+\frac12\right\rfloor$.

\end{enumerate}

\end{theorem}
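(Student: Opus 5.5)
The plan is to show that the triple $(c_{E/\mathbb{R}},\Omega_E,\Omega_{E^D})$ determines the period lattice $\Lambda_E\subset\mathbb{C}$ of the Néron differential of a global minimal model of $E$. Once $\Lambda_E$ is known, the invariants
\[
c_4=12\,g_2(\Lambda_E),\qquad c_6=216\,g_3(\Lambda_E)
\]
of that minimal model are determined. These lie in $\mathbb{Q}$ (indeed in $\mathbb{Z}$) and pin down $E$ up to $\mathbb{Q}$-isomorphism. Part (2) will then be obtained from part (1) by a finiteness argument using Theorem~1(2).

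\textbf{Step 1 (real structure).} Write $\Lambda_E$ in its standard basis. If $c_{E/\mathbb{R}}=2$ (i.e.\ $\Delta_E>0$), then $\Lambda_E=\mathbb{Z}\omega_1+\mathbb{Z}\omega_2$ is rectangular, with $\omega_1\in\mathbb{R}_{>0}$, $\omega_2\in i\mathbb{R}_{>0}$ and $\Omega_E=2\omega_1$. If $c_{E/\mathbb{R}}=1$, the lattice is rhombic, generated by $\omega_1$ and $(\omega_1+\omega_2)/2$ with $\omega_1$ real, $\omega_2$ purely imaginary and $\Omega_E=\omega_1$. In either case $\Lambda_E$ is determined by $c_{E/\mathbb{R}}$, $\Omega_E$ and the positive real number $|\omega_2|$ (the imaginary period).

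\textbf{Step 2 (imaginary period from the twist).} I would show that
\[
\Omega_{E^D}=\kappa\,|D|^{-1/2}\,|\omega_2|
\]
with an explicit constant $\kappa\in\{1/2,1,2\}$ that depends only on $c_{E/\mathbb{R}}$. The argument goes as follows.
\begin{itemize}
\item Take the model $Dy^2=x^3+\cdots$, equivalently $y^2=x^3-27c_4D^2x-54c_6D^3$. Over $\mathbb{C}$, the substitution $y\mapsto y\sqrt{D}$ identifies its lattice with $\sqrt{D}^{-1}\Lambda_E$, up to the fixed scaling between the short and long Weierstrass forms.
\item Since $D<0$, we have $\sqrt D\in i\mathbb{R}$, so the real period of $E^D$ comes from $\omega_2$. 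Because $\operatorname{sign}\Delta_{E^D}=\operatorname{sign}\Delta_E$, the component count $c_{E^D/\mathbb{R}}=c_{E/\mathbb{R}}$ fixes $\kappa$.
\item The hypotheses $\gcd(D,\Delta_E)=1$ and $D\equiv1\bmod 4$ are used to see that the twisted model, after the standard change of variables, is still globally minimal with $\Delta_{E^D}=D^6\Delta_E$. In particular, no extra power of $2$ or of a prime $p\mid D$ enters the Néron differential. This gives the formula with exactly $|D|^{-1/2}$.
\end{itemize}
Thus $|\omega_2|$ is recovered from the data, and Steps 1 and 2 together give part (1).

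\textbf{Step 3 (the obstacle).} I expect Step 2 to be the main difficulty: checking minimality of $E^D$ at $2$ and at the primes dividing $D$ in all Kodaira/Tate cases. I would verify it with Tate's algorithm applied to $[a_1,\dots,a_6]$ twisted by $D$. At primes $p\mid D$, the twist has additive reduction of type $I_0^*$ with $v_p(\Delta)=6$, which is minimal. At $p=2$, the condition $D\equiv 1\bmod 4$ means $\mathbb{Q}(\sqrt D)/\mathbb{Q}$ is unramified at $2$, so the minimal discriminant valuation at $2$ is unchanged. If this verification fails in some case, I would fall back on the isogeny route: Theorem~1(1) makes any candidate $E'$ isogenous to $E$, and I would compare the data across the finite isogeny class.

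\textbf{Step 4 (part (2)).} By Theorem~1(2) with $a=1$, a sufficiently good approximation of $\Omega_{E'}$ forces $E'$ to be $\mathbb{Q}$-isogenous to $E$. Then $E'$ has the same bad primes as $E$, so $D$ satisfies the hypotheses of the theorem for $E'$ as well. The isogeny class is finite, and by part (1) distinct members have distinct triples. Hence we may choose $N$ so large that the following hold:
\begin{itemize}
\item the rounding error $10^{-N}$ lies below $\varepsilon(E,1)\,\Omega_{E'}$ (a bound uniform enough after the isogeny is established);
\item $10^{-N}$ is smaller than half the minimum distance between the distinct pairs $(\Omega_{E'},\Omega_{E'^D})$ within the class.
\end{itemize}
With this choice of $N$, the rounded data determine $E$ up to $\mathbb{Q}$-isomorphism.
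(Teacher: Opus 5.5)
Your route for (1) is the paper's. You get $\Omega^+$ from $c_{E/\mathbb{R}}$ and $\Omega_E$, get $|\Omega^-|$ from $\Omega_{E^D}$, and then recover the lattice and $(c_4,c_6)$ as in Proposition~\ref{rectangler}. Your Step~2 is a correct direct proof of what the paper imports as Corollary~2.6 of Pal: the model with invariants $(D^2c_4,D^3c_6)$ is integral and minimal, its lattice is $D^{-1/2}\Lambda_E$, and in fact $\kappa=c_{E/\mathbb{R}}$.

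The gap is in how you conclude (1). Uniqueness means every $E'/\mathbb{Q}$ with the same triple is isomorphic to $E$. So you must run Steps~1--2 for the competitor $E'$, and Step~2 for $E'$ needs $\gcd(D,\Delta_{E'})=1$, which is not among the hypotheses. This matters. If an odd $p\mid D$ divided $\Delta_{E'}$ (say $E'$ of type $I_0^*$ at $p$), the naive twist of $E'$ can fail to be minimal at $p$. Then $\Omega_{E'^D}$ carries an extra factor ($\widetilde u\neq 1$ in the paper's notation), and your recipe would not return $\Lambda_{E'}$. Until such $E'$ are excluded, ``Steps 1 and 2 together give part (1)'' only proves uniqueness among curves for which $D$ already satisfies the hypotheses.

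The missing step is the paper's first move. $\Omega_E=\Omega_{E'}$ gives $E\sim_{\mathbb{Q}}E'$ by Theorem~\ref{periodmain}, hence the same bad primes, hence $D$ satisfies the hypotheses for $E'$ as well. You state exactly this in Step~4 and mention the isogeny only as a fallback in Step~3. In (1) it is indispensable and must come before Step~2 is applied to $E'$.

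For (2) you take a different but valid path. Theorem~\ref{rounded} with $a=1$ puts $E'$ in the finite isogeny class of $E$, and you then separate the finitely many triples in that class using (1). The paper instead applies Lemma~\ref{lonely} directly to each coordinate. Periods are isolated away from $0$, so a precise enough rounding recovers $\Omega_{E'}$ and $\Omega_{E'^D}$ exactly, and (1) finishes. That is shorter and does not need finiteness of isogeny classes, while your version ties $N$ explicitly to the isogeny class.

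One slip in Step~4: you need a lower bound on $\Omega_{E'}$ to turn the absolute error $10^{-N}$ into a relative error below $\varepsilon(E,1)$. It cannot be taken ``after the isogeny is established'', since that is circular. Take it from $\Omega_{E'}>\Omega_E-10^{-N}\geq\Omega_E/2$ for $N$ large.
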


Even if non-isomorphic elliptic curves $E$ and $E'$ become BSD twins for infinitely many values of $D$, their BSD real periods cannot remain identical for every $D$. For some quadratic twist, either the real periods or the numbers of real connected components must differ, thereby detecting the non-isomorphism. In other words, BSD twins can be distinguished by passing to a suitable quadratic twist.

\subsection{Notation}

\begin{itemize}

\item $\Omega_{E}$: The real period of $E$, given by $\Omega_E = \int_{E(\mathbb{R})} |\omega|$ where $\omega$ is the invariant differential of a global minimal model of $E/\Bbb{Q}$. 

\item For abelian varieties \(A\) and \(B\) over \(K\), the notation
\[
A\sim_K B
\]
means that \(A\) and \(B\) are isogenous over \(K\).

\item $E^D/\mathbb{Q}$: For the Weierstrass model $E/\mathbb{Q}: y^2=x^3+ax^2+bx+c$, the quadratic twist of $E/\mathbb{Q}$ by a square-free integer $D$ is an elliptic curve defined by the equation \[E^D: y^2=x^3+aDx^2+bD^2x+cD^3.\]    

\item We put
\[
c_{E/\mathbb{R}}:=[E(\mathbb{R}):E^0(\mathbb{R})]\in\{1,2\},
\]
where $E^0(\mathbb{R})$ denotes the identity component of $E(\mathbb{R})$ for an elliptic curve over $\Bbb{R}$.

\item \(A\ll B\) means that there exists a constant \(C>0\) such that \(|A|\leq CB\) for all values of the variables under consideration.
\end{itemize}

\section{Real period remembers the rational isogeny class }
Let \(E/\mathbb{Q}\) be an elliptic curve. Complex conjugation acts on \(H_1(E(\mathbb{C}),\mathbb{Z})\).  We define $H_1^+(E(\mathbb{C}),\mathbb{Z})
:=
\left\{
r\in H_1(E(\mathbb{C}),\mathbb{Z})
\mid \overline{r}=r
\right\}$, and $H_1^-(E(\mathbb{C}),\mathbb{Z})
:=
\left\{
r\in H_1(E(\mathbb{C}),\mathbb{Z})
\mid \overline{r}=-r
\right\}$.

Let \(\omega_{\min}\) be a Néron differential.  We put
\[
\Omega_E := \int_{E(\mathbb{R})} |\omega_{\min}|,
\]
and, for generators
\[
r^+\in H_1^+(E(\mathbb{C}),\mathbb{Z}), r^-\in H_1^-(E(\mathbb{C}),\mathbb{Z}),
\]
define
\[
\Omega_E^+
:=
\left|
\int_{r^+} \omega_{\min}
\right|, \Omega_E^-
:=
\int_{r^-} \omega_{\min}.
\]
Here \(\Omega_E^+\) is real and positive, while \(\Omega_E^-\) is purely imaginary.

\begin{lemma}\label{plusminus}
\[
\Omega_E = c_{E/\Bbb{R}}\Omega^{+}_E.
\]
\end{lemma}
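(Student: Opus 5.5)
We reduce to an explicit description of the period lattice. Choose a global minimal Weierstrass model and let $\Lambda=\{\int_\gamma\omega_{\min} : \gamma\in H_1(E(\mathbb{C}),\mathbb{Z})\}$ be its period lattice, so that $E(\mathbb{C})\cong\mathbb{C}/\Lambda$ via the Abel--Jacobi map $P\mapsto\int_O^P\omega_{\min}$. Since $E$ and $\omega_{\min}$ are defined over $\mathbb{R}$, complex conjugation on $E(\mathbb{C})$ corresponds to $z\mapsto\bar z$ on $\mathbb{C}/\Lambda$. Hence $\Lambda=\bar\Lambda$, and the action of conjugation on $H_1$ corresponds to conjugation on $\Lambda$. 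In particular, $\int_{r^+}\omega_{\min}$ generates $\Lambda\cap\mathbb{R}$, so that $\Lambda\cap\mathbb{R}=\Omega_E^+\mathbb{Z}$.

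Next we identify $E(\mathbb{R})$ with the fixed points of $z\mapsto\bar z$ on $\mathbb{C}/\Lambda$, that is, with $\{z : \bar z-z\in\Lambda\}/\Lambda$. The identity component $E^0(\mathbb{R})$ is the image of $\mathbb{R}$, which is $\mathbb{R}/(\Lambda\cap\mathbb{R})=\mathbb{R}/\Omega_E^+\mathbb{Z}$. The map $z\mapsto z$ restricted to $\mathbb{R}$ is a local isometry for $|\omega_{\min}|=|dz|$. Therefore
\[
\int_{E^0(\mathbb{R})}|\omega_{\min}|=\Omega_E^+ .
\]

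Finally, $E(\mathbb{R})$ is a compact one-dimensional real Lie group, so it is isomorphic to $E^0(\mathbb{R})\times\mathbb{Z}/c_{E/\mathbb{R}}\mathbb{Z}$. Each component is a translate $P+E^0(\mathbb{R})$ of the identity component. Since $\omega_{\min}$ is translation-invariant, $|\omega_{\min}|$ is translation-invariant too, so every component has measure $\Omega_E^+$. Summing over the $c_{E/\mathbb{R}}$ components gives $\Omega_E=c_{E/\mathbb{R}}\Omega_E^+$.

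The only delicate point is the claim that $\Lambda\cap\mathbb{R}$ is generated by the integral of the generator $r^+$ of $H_1^+$, that is, that the fixed sublattice of $H_1$ maps isomorphically onto $\Lambda\cap\mathbb{R}$. This follows because the period map $H_1\to\Lambda$ is a conjugation-equivariant isomorphism of groups: a cycle $\gamma$ satisfies $\bar\gamma=\gamma$ if and only if its period is real. Alternatively, one can check the statement directly using the two standard lattice shapes, rectangular when $\Delta>0$ and rhombic when $\Delta<0$, where $c_{E/\mathbb{R}}=2$ and $1$ respectively.
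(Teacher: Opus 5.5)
Your proof is correct and takes the same route as the paper: the components of $E(\mathbb{R})$ are translates of $E^0(\mathbb{R})$, so translation-invariance of $|\omega_{\min}|$ gives $\Omega_E=c_{E/\mathbb{R}}\int_{E^0(\mathbb{R})}|\omega_{\min}|$. You also justify a step the paper only asserts, namely $\int_{E^0(\mathbb{R})}|\omega_{\min}|=\Omega_E^+$, by using the conjugation-equivariant isomorphism $H_1(E(\mathbb{C}),\mathbb{Z})\cong\Lambda$ to get $\Lambda\cap\mathbb{R}=\Omega_E^+\mathbb{Z}$ and $E^0(\mathbb{R})=\mathbb{R}/\Omega_E^+\mathbb{Z}$.
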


\begin{proof}
If $E(\Bbb{R})$ is connected, then $\Omega_E=\Omega_{E}^+$. If $E(\Bbb{R})$ is disconnected, each connected component of $E(\mathbb R)$ is a translate of $E^0(\mathbb R)$, and the invariant differential $\omega_E$ is translation-invariant. Thus, $\Omega_E=2\Omega_{E}^+$. \end{proof}

\begin{proposition}\label{isogenousperiod}
Let \(E_1/\mathbb Q\) and \(E_2/\mathbb Q\) be elliptic curves
which are connected by an isogeny defined over \(\mathbb Q\).
Then
\[
\frac{\Omega_{E_2}}{\Omega_{E_1}}\in \mathbb Q.
\]
\end{proposition}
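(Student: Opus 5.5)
Let $\phi\colon E_1\to E_2$ be an isogeny defined over $\mathbb Q$, of degree $n$. I will compare the Néron differentials via pullback and then compare the real integrals via $\phi$ restricted to real points. By Lemma \ref{plusminus}, $\Omega_{E_i}=c_{E_i/\mathbb R}\Omega^+_{E_i}$ with $c_{E_i/\mathbb R}\in\{1,2\}$. It therefore suffices to show $\Omega^+_{E_2}/\Omega^+_{E_1}\in\mathbb Q$.

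\textbf{Step 1: pullback of differentials.} The space of invariant differentials on $E_1$ over $\mathbb Q$ is one-dimensional, and $\phi^*\omega_{\min,2}$ is a nonzero invariant differential on $E_1$ defined over $\mathbb Q$ (it is nonzero because $\phi$ is separable in characteristic $0$). Hence $\phi^*\omega_{\min,2}=\lambda\,\omega_{\min,1}$ for some $\lambda\in\mathbb Q^\times$. Here one only needs $\lambda$ rational, not the finer fact that $\lambda\in\mathbb Z$, which holds because $\phi$ extends to Néron models.

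\textbf{Step 2: homology.} Complex conjugation commutes with $\phi_*\colon H_1(E_1(\mathbb C),\mathbb Z)\to H_1(E_2(\mathbb C),\mathbb Z)$, since $\phi$ is defined over $\mathbb R$. So $\phi_*$ maps $H_1^+(E_1)$ into $H_1^+(E_2)$, and both are free of rank one. Take generators $r_1^+$ and $r_2^+$. Then $\phi_*r_1^+=m\,r_2^+$ for some integer $m$, and $m\neq0$ because $\phi_*$ is injective, having finite cokernel of order $n$ on rank-$2$ lattices. Then
\[
\lambda\int_{r_1^+}\omega_{\min,1}=\int_{r_1^+}\phi^*\omega_{\min,2}=\int_{\phi_*r_1^+}\omega_{\min,2}=m\int_{r_2^+}\omega_{\min,2}.
\]
Taking absolute values gives $\Omega^+_{E_2}/\Omega^+_{E_1}=|\lambda/m|\in\mathbb Q^\times$.

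\textbf{Step 3: conclusion.} Combining Step 2 with Lemma \ref{plusminus} gives $\Omega_{E_2}/\Omega_{E_1}=(c_{E_2/\mathbb R}/c_{E_1/\mathbb R})\cdot|\lambda/m|\in\mathbb Q$.

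The main point needing care is Step 2: checking that $\phi_*$ is equivariant for conjugation and injective on $H_1^+$. Equivariance follows from $\phi$ commuting with complex conjugation on $E(\mathbb C)$. Injectivity follows because the analytic map $\mathbb C/\Lambda_1\to\mathbb C/\Lambda_2$ is $z\mapsto\alpha z$ with $\alpha\Lambda_1\subset\Lambda_2$ of finite index. This analytic description also gives Step 1 directly: $\alpha=\lambda$, so $\lambda$ is real. It is rational because $\phi^*$ is defined over $\mathbb Q$.
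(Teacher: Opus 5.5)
Your proof is correct, but it takes a different route from the paper, whose proof is only a citation of Lemma~7.4 of \cite{dokchitser} with $\mathcal K=\mathbb R$. That lemma works with the whole group of real points at once. Writing $\phi^*\omega_{\min,2}=\lambda\,\omega_{\min,1}$ and $\phi_{\mathbb R}\colon E_1(\mathbb R)\to E_2(\mathbb R)$, a change of variables gives $\Omega_{E_2}/\Omega_{E_1}=|\lambda|\cdot\#\operatorname{Coker}(\phi_{\mathbb R})/\#\operatorname{Ker}(\phi_{\mathbb R})$; the paper reuses this formula in the converse part of Theorem~\ref{periodmain}.

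You instead separate out the component counts with Lemma~\ref{plusminus} and handle the identity component through $H_1^+$, getting $(c_{E_2/\mathbb R}/c_{E_1/\mathbb R})\,|\lambda/m|$. The two answers agree. The restriction $E_1^0(\mathbb R)\to E_2^0(\mathbb R)$ is a degree-$|m|$ covering of circles, so the snake lemma for $0\to E^0(\mathbb R)\to E(\mathbb R)\to\pi_0(E(\mathbb R))\to 0$ gives $\#\operatorname{Coker}(\phi_{\mathbb R})/\#\operatorname{Ker}(\phi_{\mathbb R})=c_{E_2/\mathbb R}/(c_{E_1/\mathbb R}\,|m|)$.

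Your argument has the advantage of staying inside the paper's own $H_1^{\pm}$ framework. The same computation on $H_1^-$ also shows $\Omega^-_{E_2}/\Omega^-_{E_1}\in\mathbb Q$. Running it over a number field $K$ with a real embedding $\sigma$ (so $\lambda\in K^\times$) proves the converse of Theorem~\ref{periodmain} without the external reference. The cited lemma, in exchange, is stated for an arbitrary local field $\mathcal K$. It also gives an intrinsic formula in terms of $\phi_{\mathbb R}$ alone, with no choice of homology generators.
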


\begin{proof}
See Lemma 7.4 of \cite{dokchitser}. This is the case of $\mathcal{K}=\Bbb{R}$.\end{proof}

For an algebraic group \(G\) over a field \(k\), we write
\(\operatorname{Lie}(G)\) for the tangent space of \(G\) at the identity.
Equivalently,
\[
\operatorname{Lie}G=\Ker\bigl(G(k[\varepsilon])\to G(k)\bigr),
\varepsilon^2=0
\] where $G(k[\varepsilon])\to G(k)$ is a truncation map defined by $\epsilon \mapsto 0$. We define $\mathrm{exp}_G:\mathrm{Lie}(G)\to G(\mathbb{C})$ as the unique holomorphic group homomorphism whose differential at $0$ is the identity.

\begin{theorem}[Analytic subgroup theorem \cite{annals}]\label{annals}
Let $G$ be a connected commutative algebraic group defined over $\overline{\Bbb{Q}}$. 

Take \(u \in \operatorname{Lie}(G(\mathbb{C}))\) such that
\[
\exp_G(u) \in G(\overline{\mathbb{Q}}).
\]

Let $V_u$ be the smallest $\overline{\Bbb{Q}}$-vector subspace of $\mathrm{Lie}(G)$ such that $u$ lies in the complex vector space $V_u\otimes_{\overline{\Bbb{Q}}}\Bbb{C}$. Then there exists a connected algebraic subgroup $H_u$ of $G$ defined over $\overline{\Bbb{Q}}$ satisfying $\mathrm{Lie}(H_u)=V_u$.
\end{theorem}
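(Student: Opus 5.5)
This is Wüstholz's analytic subgroup theorem, which the paper quotes from \cite{annals} without proof. If I had to prove it, I would follow the transcendence method of that paper, arguing by induction on $\dim G$.

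\emph{Reduction step.} Let $H$ be the smallest connected algebraic subgroup of $G$ over $\overline{\mathbb{Q}}$ with $u\in \operatorname{Lie}(H)\otimes\mathbb{C}$. Since $\exp_H=\exp_G|_{\operatorname{Lie}H}$, I may replace $G$ by $H$. So I assume that $u$ lies in the Lie algebra of no proper connected algebraic subgroup. Because $\operatorname{Lie}(G)$ is defined over $\overline{\mathbb{Q}}$ and contains $u$, the minimal space satisfies $V_u\subseteq\operatorname{Lie}(G)$. The goal is then to prove $V_u=\operatorname{Lie}(G)$, which gives $H_u=G$. I argue by contradiction and suppose $V:=V_u$ is a proper subspace of $\operatorname{Lie}(G)$.

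\emph{Auxiliary construction.} I fix a projective embedding of a suitable equivariant compactification of $G$ over a number field $k$. By Siegel's lemma I build a nonzero homogeneous polynomial $P$ of degree $D$, with controlled height. It is required to vanish to order at least $T$, along the directions of $V$ only, at the points $\exp_G(su)$ for $1\le s\le S$. These vanishing conditions are linear with algebraic coefficients. This uses that $\exp_G(su)\in G(\overline{\mathbb{Q}})$, and that derivations along the $\overline{\mathbb{Q}}$-space $V$ preserve algebraicity. The number of unknowns is about $D^{\dim G}$ and the number of equations is about $S\,T^{\dim V}$. Since $\dim V<\dim G$, the parameters can be chosen so that the unknowns win.

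\emph{Extrapolation.} Next I consider the entire function $f(z)=P(\exp_G(zu))$ of one complex variable, where $zu$ stays in $V\otimes\mathbb{C}$. Using a Schwarz lemma, I bound $f$ and its $V$-derivatives at larger multiples $su$. A Liouville inequality then shows that these algebraic values, being tiny, must vanish. This enlarges the vanishing set to $S'\gg S$ points with order $T'$.

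\emph{Main obstacle: the multiplicity estimate.} Here I would invoke Wüstholz's multiplicity estimate on group varieties. A nonzero $P$ of degree $D$ vanishing to order $T'$ along $V$ on $\{su: s\le S'\}$ forces a proper connected algebraic subgroup $H'$ with a degree inequality roughly of the form
\[
S'\,T'^{\dim V-\dim(V\cap \operatorname{Lie}H')}\,\deg H' \ll D^{\dim G-\dim H'}.
\]
The parameters are chosen so that $H'=\{0\}$ is impossible, so $H'$ is a genuine proper subgroup. To exploit it, I pass to the quotient $G/H'$. There the image of $u$ again has algebraic exponential, and the inequality forces $u$ to lie in $\operatorname{Lie}(H')\otimes\mathbb{C}+V\otimes\mathbb{C}$ in a degenerate way. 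The induction hypothesis applied to $G/H'$ (together with $H'$) then produces a proper algebraic subgroup whose Lie algebra contains $u$. This contradicts the reduction step.

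Proving the multiplicity estimate itself is the deep part. It needs intersection theory on the compactification, and that is where the argument would be hardest to carry out. The remaining steps are standard transcendence bookkeeping.
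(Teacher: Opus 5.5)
The paper gives no proof of this theorem; it quotes Wüstholz. Your outline has the right overall shape: reduce to $G=G_u$, then Siegel's lemma, extrapolation, the multiplicity estimate, and induction through quotients. As written, however, two steps fail.

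\textbf{The torsion case is not handled.} This is exactly the case the paper uses: in Proposition~\ref{commonperiod} one has $\exp_G(u)=(O_{E_1},O_{E_2})$. The left-hand side of Wüstholz's multiplicity estimate contains the number of cosets $\#\bigl((\Sigma+H')/H'\bigr)$, not $S'$. Suppose $\gamma=\exp_G(u)$ has finite order, and write $n=\dim G$, $d=\dim V_u$.
\begin{itemize}
\item All points $\exp_G(su)=s\gamma$ lie in a fixed finite set, so your Siegel count is only $D^{n}\gg T^{d}$.
\item Extrapolation in $s$ produces no new points.
\item The obstruction $H'=\{0\}$, which reads $T'^{d}\ll D^{n}$, is then perfectly consistent with your construction, so you never obtain a nontrivial $H'$ to quotient by.
\end{itemize}
Switching to extrapolation in the order of vanishing at a fixed point does not repair this in general. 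Write $u=\sum_i c_iv_i$ in a $\overline{\mathbb{Q}}$-basis of $V_u$. Then $f^{(k)}(0)=\bigl(\sum_i c_i\partial_{v_i}\bigr)^k(P\circ\exp_G)(0)$. By minimality of $V_u$ the $c_i$ are $\overline{\mathbb{Q}}$-linearly independent. So for $d\ge 2$ these values are linear forms in algebraic numbers with non-proportional complex coefficients, and Liouville's inequality does not apply to them. For $d=1$, which covers $V_u=\overline{\mathbb{Q}}(e_1,e_2)$ in the paper, $\partial_u$ is a scalar multiple of an algebraic derivation, and multiplicity extrapolation does work. In general, though, your sketch is missing a device for torsion points.

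\textbf{The final step is not correct as stated.} The condition $u\in\operatorname{Lie}(H')_{\mathbb{C}}+V_{\mathbb{C}}$ holds automatically, because $u\in V_{\mathbb{C}}$. Apply the induction hypothesis to the image $\bar u$ in $G/H'$, with $G=G_u$. What it actually yields is $V+\operatorname{Lie}(H')=\operatorname{Lie}(G)$, which is no contradiction.

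For such $H'$ one has $\dim V-\dim(V\cap\operatorname{Lie}H')=\operatorname{codim}H'$. The multiplicity estimate then gives $T'^{\,\operatorname{codim}H'}\deg H'\ll D^{\operatorname{codim}H'}$. You must exclude this separately by imposing $T'\ge cD$. You then have to check that this is compatible with $D^{n}\gg ST^{d}$ and with the Schwarz-lemma condition $ST\gg DS'^{\rho}$. It is: for $\rho=2$, take $T=D^{a}$ with $\tfrac{n+1}{d+1}<a<\tfrac{n}{d}$, $S\asymp D^{n-ad}$, and $S'=\kappa S$ with $\kappa$ a large constant.

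The argument only closes through a trichotomy:
\begin{itemize}
\item $H'=\{0\}$ is excluded by the point count, which again needs $\exp_G(u)$ of infinite order;
\item $H'$ with $V+\operatorname{Lie}H'=\operatorname{Lie}G$ is excluded by $T'\gg D$;
\item only the remaining $H'$ are excluded by induction.
\end{itemize}
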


\begin{lemma}
Let \(G\) be a connected algebraic group over
\(k=\overline{\mathbb{Q}}\).

\begin{enumerate}
\item For every algebraic subgroup \(H\subset G\), one has
\[
\dim_k \operatorname{Lie}(H)=\dim H.
\]

\item The map
\[
H \longmapsto \operatorname{Lie}(H)
\]
from connected algebraic subgroups \(H\) of \(G\) to Lie subalgebras
of \(\operatorname{Lie}(G)\) is injective and inclusion preserving.
\end{enumerate}
\end{lemma}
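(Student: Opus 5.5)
Since $k=\overline{\mathbb{Q}}$ has characteristic zero, I would prove both parts using standard facts about algebraic groups in characteristic $0$; the key input is Cartier's theorem.

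\emph{Part (1).} Cartier's theorem says every group scheme of finite type over a field of characteristic $0$ is smooth. Apply it to an algebraic subgroup $H\subset G$. Then $H$ is smooth, so at the identity $e$ the tangent space $T_eH=\operatorname{Lie}(H)$ has dimension equal to the local dimension of $H$ at $e$. Since $H$ is a group, translations act transitively on $H(k)$, so every irreducible component through $e$ (indeed every component) has dimension $\dim H$. This gives $\dim_k\operatorname{Lie}(H)=\dim H$. This step is the only place characteristic zero is needed; in positive characteristic non-reduced subgroups such as $\mu_p$ break the equality.

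\emph{Part (2), inclusion preserving.} If $H_1\subset H_2$ is a closed immersion of group schemes, then the map $H_1(k[\varepsilon])\to H_2(k[\varepsilon])$ is injective and compatible with truncation. Hence $\operatorname{Lie}(H_1)\subset\operatorname{Lie}(H_2)$ inside $\operatorname{Lie}(G)$.

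\emph{Part (2), injectivity.} Suppose $H_1,H_2$ are connected with $\operatorname{Lie}(H_1)=\operatorname{Lie}(H_2)=\mathfrak h$. I would consider $H:=(H_1\cap H_2)^0_{\mathrm{red}}$, or simply the scheme-theoretic intersection $H_1\cap H_2$. The latter is a group scheme, hence smooth by Cartier. Its Lie algebra is the fibre product of Lie algebras, namely
\[
\operatorname{Lie}(H_1\cap H_2)=\operatorname{Lie}(H_1)\cap\operatorname{Lie}(H_2)=\mathfrak h,
\]
because $k[\varepsilon]$-points of an intersection are pairs of points agreeing in $G$. By part (1),
\[
\dim(H_1\cap H_2)^0=\dim\mathfrak h=\dim H_1 .
\]
Now $(H_1\cap H_2)^0$ is a closed subgroup of the irreducible variety $H_1$ of the same dimension, so it equals $H_1$. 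Here I use that connected algebraic groups are irreducible, as a connected group is irreducible because the identity lies on a unique component. By symmetry it also equals $H_2$, so $H_1=H_2$.

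The main obstacle is really just the correct invocation of Cartier's theorem, both for $H$ itself and for the scheme-theoretic intersection. Everything else is formal dimension counting. If one prefers to avoid scheme-theoretic intersections, an alternative is to pass to $\mathbb{C}$ and use that a connected complex Lie group is generated by $\exp$ of its Lie algebra, giving $H_1(\mathbb{C})=H_2(\mathbb{C})$. I would cite the algebraic version, for instance Milne's \emph{Algebraic Groups}.
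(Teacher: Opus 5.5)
Your proposal is correct and follows essentially the same route as the paper. The paper simply cites Milne's \emph{Algebraic Groups} (Prop.~1.37 for part (1), Prop.~10.15 and Cor.~10.16 for part (2)), and your argument is the standard proof behind those citations: smoothness via Cartier so that $\dim_k\operatorname{Lie}(H)=\dim H$, then $\operatorname{Lie}(H_1\cap H_2)=\operatorname{Lie}(H_1)\cap\operatorname{Lie}(H_2)$ and a dimension count inside the irreducible group $H_1$. Making the appeal to Cartier's theorem explicit is worthwhile, since the tangent-space inequality $\dim_k\operatorname{Lie}(H)\ge\dim H$ becomes an equality only once smoothness of $H$ and of $H_1\cap H_2$ is known.
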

\begin{proof}
(1) See Proposition 1.37 of \cite{MilneAG}.
(2) See Proposition 10.15 and Corollary 10.16 of \cite{MilneAG}.\end{proof}

\begin{definition}
Let \(E/\mathbb C\) be an elliptic curve and let \(\omega\) be a nonzero
invariant differential on \(E\). Since \(\dim_{\mathbb C}\operatorname{Lie}(E)=1\)
and \(\omega\neq 0\), the map \(\omega:\operatorname{Lie}(E)\to\mathbb C\) is an
isomorphism; let \(e_{\omega}\in \operatorname{Lie}(E)\) be the unique tangent
vector dual to \(\omega\), namely
\[
\omega(e_{\omega})=1.
\]

We fix an isomorphism
\[
\iota: \mathbb C \xrightarrow{\sim} \operatorname{Lie}(E), z\longmapsto z e_{\omega},
\]
we define a holomorphic group homomorphism $\exp_{E,\omega}:\mathbb C\longrightarrow E(\mathbb C), z\longmapsto \exp_E(z e_{\omega})$.
Define
\[
\Lambda(E,\omega)
\stackrel{\text{def}}{=}
\Ker(\exp_{E,\omega}).
\]
\end{definition}

\begin{proposition}\label{delicate}
Let
\[
\Lambda:=\left\{
\int_{\gamma}\omega
\;\middle|\;
\gamma\in H_1(E(\mathbb C),\mathbb Z)
\right\}.
\]
Then, \(\Lambda(E,\omega)=\Lambda\).
\end{proposition}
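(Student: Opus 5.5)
We prove the two inclusions by identifying the exponential map with the Abel--Jacobi map. Fix the base point $O\in E(\mathbb C)$ and consider the Abel--Jacobi map
\[
\mathrm{AJ}\colon E(\mathbb C)\longrightarrow \mathbb C/\Lambda,\qquad P\longmapsto \int_O^P\omega \bmod \Lambda .
\]
It is well defined precisely because two paths from $O$ to $P$ differ by a closed loop, whose integral lies in $\Lambda$. We take as known the classical facts that $\Lambda$ is a lattice in $\mathbb C$ (the periods of a nonzero holomorphic $1$-form on a genus one curve span $\mathbb C$ over $\mathbb R$) and that $\mathrm{AJ}$ is a complex-analytic isomorphism of groups, by Abel's theorem for genus one; see e.g.\ Silverman, Ch.~VI. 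The plan is to show that $\mathrm{AJ}\circ\exp_{E,\omega}\colon\mathbb C\to\mathbb C/\Lambda$ is the quotient map $z\mapsto z \bmod \Lambda$. Taking kernels then gives $\Lambda(E,\omega)=\Lambda$.

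\textbf{Step 1.} The composite $\phi:=\mathrm{AJ}\circ\exp_{E,\omega}$ is a holomorphic group homomorphism $\mathbb C\to\mathbb C/\Lambda$. Since $\mathbb C$ is simply connected, it lifts uniquely to a holomorphic homomorphism $\tilde\phi\colon\mathbb C\to\mathbb C$ with $\tilde\phi(0)=0$. Such a map is additive and holomorphic, hence $\mathbb C$-linear, so $\tilde\phi(z)=cz$ for some constant $c$.

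\textbf{Step 2.} We show $c=1$ by computing derivatives at $0$. The differential of $\exp_{E,\omega}$ at $0$ sends $\partial_z$ to $e_\omega$, because $d\exp_E$ is the identity on $\operatorname{Lie}(E)$. The differential of $\mathrm{AJ}$ at $O$ is evaluation of $\omega$, since $d\bigl(\int_O^P\omega\bigr)=\omega$. Therefore $c=\omega(e_\omega)=1$. Equivalently, the pullback $\exp_{E,\omega}^*\omega$ is a translation-invariant form on $\mathbb C$ with value $1$ at the origin, so $\exp_{E,\omega}^*\omega=dz$.

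\textbf{Step 3.} By Steps 1 and 2, $\mathrm{AJ}(\exp_{E,\omega}(z))=z \bmod \Lambda$. Since $\mathrm{AJ}$ is injective, $\exp_{E,\omega}(z)=O$ if and only if $z\in\Lambda$. Hence $\Lambda(E,\omega)=\Lambda$.

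Alternatively, the two inclusions can be seen directly. For $\lambda\in\Lambda(E,\omega)$, the segment $t\mapsto \exp_{E,\omega}(t\lambda)$ for $t\in[0,1]$ is a closed loop $\gamma$. By Step 2,
\[
\int_\gamma\omega=\int_0^\lambda dz=\lambda,
\]
so $\lambda\in\Lambda$. Conversely, for a loop $\gamma$, the map $\exp_{E,\omega}$ is surjective with discrete kernel, hence a covering map. Lift $\gamma$ to a path in $\mathbb C$ starting at $0$; its endpoint $\lambda$ satisfies $\exp_{E,\omega}(\lambda)=O$ and $\lambda=\int_\gamma\omega$.

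The main obstacle is the converse inclusion: it needs either injectivity of $\mathrm{AJ}$ (Abel's theorem) or the fact that $\exp_{E,\omega}$ is a covering map. Surjectivity of $\exp_{E,\omega}$ holds because its image is an open subgroup of the connected group $E(\mathbb C)$. Discreteness of the kernel holds because $\exp_{E,\omega}$ is a local biholomorphism at $0$. So the covering approach uses only these two properties, and I would present the argument that way to avoid relying on Abel's theorem.
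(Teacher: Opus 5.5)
Your proof is correct. Steps 1--3 follow essentially the paper's argument. The paper also imports the uniformization from Silverman VI.5.2, but works with the inverse $F\colon\mathbb C/\Lambda\to E(\mathbb C)$. It asserts that $F\circ\pi$ and $\exp_{E,\omega}$ coincide because they are holomorphic homomorphisms with the same pullback of $\omega$. Your lift of $\mathrm{AJ}\circ\exp_{E,\omega}$ to a linear map $z\mapsto cz$, with $c=\omega(e_\omega)=1$, supplies the justification the paper leaves implicit for that step.

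The covering-space argument you say you would actually present is a genuinely different route. It uses only two things. First, $\exp_{E,\omega}$ is a holomorphic homomorphism with invertible differential at $0$, hence, by connectedness of $E(\mathbb C)$, a surjective covering map. Second, $\exp_{E,\omega}^*\omega=dz$ by translation invariance. It needs neither Abel's theorem nor the a priori fact that $\Lambda$ is a lattice. In fact it yields that fact for free, since $\ker\exp_{E,\omega}$ is discrete with compact quotient $E(\mathbb C)$.

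The paper's route instead relies on the full analytic isomorphism $E(\mathbb C)\cong\mathbb C/\Lambda$. That is heavier, but it is the form in which the period lattice is used later, e.g.\ in Proposition~\ref{rectangler}, where $\Lambda$ determines $c_4$ and $c_6$.

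If you write the covering version, add two routine remarks so that lifting loops really exhausts $\Lambda$:
\begin{itemize}
\item $\int_\gamma\omega$ depends only on the homology class of $\gamma$, because holomorphic $1$-forms on a Riemann surface are closed.
\item Every class in $H_1(E(\mathbb C),\mathbb Z)$ is represented by a loop based at $O$, by surjectivity of the Hurewicz map.
\end{itemize}
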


\begin{proof}(\textit{cf.} Proposition 8.15 of \cite{lecture})
By Proposition VI.5.2 of \cite{sil} (The statement there is written
for the standard invariant differential; the general case follows by scaling,
since \(\Lambda(E,c\omega)=c\,\Lambda(E,\omega)\) and
\(\int_\gamma c\omega=c\int_\gamma\omega\) for \(c\in\mathbb C^{\times}\)), \(\Lambda\) is a lattice and
\[
E(\mathbb C)\xrightarrow{\ \cong\ }\mathbb C/\Lambda,
\qquad P\longmapsto \int_{O}^{P}\omega,
\]
is an isomorphism of complex Lie groups (see Proposition 5.2 (b) of \cite{sil}). 

Let \(F:\mathbb C/\Lambda\cong  E(\mathbb C)\) be the isomorphism map of uniformization. Let \(\pi:\mathbb C\to\mathbb C/\Lambda\) be the projection. 
We have $(F\circ \pi)^*\omega=dz$ by the proof of Proposition 5.2 of \cite{sil}.

Therefore, the pullback of $\omega$ by the two map $\Bbb{C}\to E(\Bbb{C})$ in the following diagram is the same when $\omega(e_{\omega})=1$.
{\footnotesize
\[
\begin{tikzcd}[column sep=large, row sep=large]
\mathbb{C}
  \arrow[r, "\iota:\cong"]
  \arrow[dr, "\pi"    ]
&
\operatorname{Lie} E
  \arrow[dr, "\exp_E"]
&
\\
&
\mathbb{C}/\Lambda
  \arrow[r, "F: \cong"]
&
E(\mathbb{C})
\end{tikzcd}
\]}
Therefore \(F \circ \pi\) and \(\exp_{E,\omega} \) are holomorphic group homomorphisms
\(\mathbb C\to E(\mathbb C)\) with the same pullback of $\omega$, so
\(F\circ \pi=\exp_{E,\omega}\).
Since \(\Ker\pi=\Lambda\) and \(\mathbb C/\Lambda\to E(\mathbb C)\) is
injective, we obtain $\Lambda(E,\omega)=\Ker(\exp_{E,\omega})=\Ker (F\circ \pi)=\Lambda$.\end{proof}

\begin{proposition}\label{commonperiod}

Let $K\subset\overline{\mathbb Q}\subset\mathbb C$ be a number field. Let \(E_1\) and \(E_2\) be elliptic curves over $K$.  For
\(i=1,2\), let \(\omega_i\) be a nonzero invariant differential on
\(E_i\) defined over $K$. Suppose that there exists a
nonzero complex number \(\Omega\) such that
\[
        \Omega \in \Lambda(E_1,\omega_1)
        \quad\text{and}\quad
        \Omega \in \Lambda(E_2,\omega_2),
\]
where \(\Lambda(E_i,\omega_i)\) denotes the period lattice of
\((E_i,\omega_i)\).  Then \(E_1\) and \(E_2\) are isogenous over $K$.
\end{proposition}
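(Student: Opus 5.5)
Our plan is to apply the analytic subgroup theorem (Theorem \ref{annals}) to the product $G=E_1\times E_2$, obtain an isogeny over $\overline{\mathbb Q}$, and then descend it to $K$ using the uniqueness of the resulting subgroup.

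\textbf{Step 1: set-up.} Put $G=E_1\times E_2$, a connected commutative algebraic group over $K\subset\overline{\mathbb Q}$, with $\operatorname{Lie}(G)=\operatorname{Lie}(E_1)\oplus\operatorname{Lie}(E_2)$. Let $e_i:=e_{\omega_i}$. Since $\omega_i$ is defined over $K$, the vector $e_i$ spans the $K$-structure $\operatorname{Lie}(E_{i,K})$. Take
\[
u=(\Omega e_1,\Omega e_2)\in \operatorname{Lie}(G(\mathbb C)).
\]
By Proposition \ref{delicate} and the hypothesis, $\exp_G(u)=(\exp_{E_1,\omega_1}(\Omega),\exp_{E_2,\omega_2}(\Omega))=(O,O)\in G(\overline{\mathbb Q})$. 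Moreover $u=\Omega\cdot(e_1,e_2)$ with $(e_1,e_2)$ a $\overline{\mathbb Q}$-rational vector. Hence $V_u=\overline{\mathbb Q}\,(e_1,e_2)$, which is one-dimensional because $\Omega\neq0$.

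\textbf{Step 2: an isogeny over $\overline{\mathbb Q}$.} Theorem \ref{annals} gives a connected algebraic subgroup $H\subset G_{\overline{\mathbb Q}}$ with $\operatorname{Lie}(H)=V_u$. By part (1) of the preceding lemma, $\dim H=1$, so $H$ is an elliptic curve. The projection $p_i\colon H\to E_i$ has differential $(e_1,e_2)\mapsto e_i\neq0$. So $p_i$ is nonconstant, hence surjective with finite kernel, i.e.\ an isogeny. Composing $p_2$ with the dual isogeny $\widehat{p_1}$ gives an isogeny $E_1\to E_2$ over $\overline{\mathbb Q}$.

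\textbf{Step 3: descent to $K$ (main obstacle).} We show $H$ is defined over $K$. For $\tau\in\operatorname{Gal}(\overline{\mathbb Q}/K)$, the conjugate ${}^\tau H$ is again a connected algebraic subgroup of $G_{\overline{\mathbb Q}}$, with $\operatorname{Lie}({}^\tau H)={}^\tau V_u$. Since $e_1,e_2$ are $K$-rational, ${}^\tau V_u=V_u$. By the injectivity in part (2) of the lemma, ${}^\tau H=H$ for every $\tau$. Hence $H$, a closed subvariety of $G$ stable under $\operatorname{Gal}(\overline{\mathbb Q}/K)$, descends to a closed subgroup $H_K\subset G$ over $K$; this uses that $K$ is perfect, so Galois descent for closed subschemes applies. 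Then $H_K$ is a genus one curve with the rational point $(O,O)$, i.e.\ an elliptic curve over $K$. The projections $p_i$ are restrictions of $K$-morphisms, so they are isogenies over $K$.

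Finally, $\widehat{p_1}$ is defined over $K$ since $p_1$ is, so $p_2\circ\widehat{p_1}\colon E_1\to E_2$ is a nonzero isogeny over $K$. The delicate point is checking that the Galois action on $\operatorname{Lie}(G_{\overline{\mathbb Q}})$ is compatible with conjugating subgroups, and that $V_u$ is genuinely $K$-rational. This is precisely why we need the differentials $\omega_i$ to be defined over $K$ and not merely over $\overline{\mathbb Q}$.
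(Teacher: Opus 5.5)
Your proof is correct and follows the paper's argument essentially step for step: you apply Wüstholz's theorem to $u=\Omega\,(e_1,e_2)$ in $E_1\times E_2$, use the projections of the resulting one-dimensional subgroup $H$ as isogenies, and descend to $K$ via ${}^\tau V_u=V_u$ together with the fact that connected subgroups are determined by their Lie algebras. The only differences are cosmetic: you invoke part (2) of the lemma where the paper cites Milne, and you remark explicitly that the point $(O,O)$ makes $H_K$ an elliptic curve over $K$.
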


\begin{proof}
Put $G=E_1\times E_2$. Then, $\text{exp}_G=\text{exp}_{E_1}\times \text{exp}_{E_2}$. 
For each \(i=1,2\), let  $e_i\in \operatorname{Lie}(E_i)$ be the basis dual to \(\omega_i\), namely \(\omega_i(e_i)=1\).
Then
\[
        \operatorname{Lie}(G)_{\overline{\mathbb Q}}
        =
        \overline{\mathbb Q}e_1\oplus \overline{\mathbb Q}e_2.
\]
By Proposition \ref{delicate}, we have
\[
        \exp_{E_1}(\Omega e_1)=O_{E_1},
        \exp_{E_2}(\Omega e_2)=O_{E_2}.
\]
Hence, for $u:=(\Omega e_1,\Omega e_2)=\Omega \cdot (e_1,e_2)
        \in \operatorname{Lie}(G(\mathbb C))$, we have $\exp_G(u)=(O_{E_1},O_{E_2})\in G(\overline{\mathbb Q})$.

Let \(V_u\) be the smallest \(\overline{\mathbb Q}\)-vector subspace of
\(\operatorname{Lie}(G)\) such that $u\in V_u\otimes_{\overline{\mathbb Q}}\mathbb C$. Since \(u=\Omega \cdot (e_1,e_2)\) and \(\Omega\neq 0\), we have
\[
        V_u=\overline{\mathbb Q}\cdot (e_1,e_2).
\]
By W\"ustholz's analytic subgroup theorem, there exists a connected
algebraic subgroup $H\subset E_1\times E_2$ such that
\[
        \operatorname{Lie}(H)=V_u
        =
        \overline{\mathbb Q}\cdot(e_1,e_2).
\]

We first show that \(H\) gives an isogeny over \(\overline{\mathbb Q}\).
Let $p_i:H\longrightarrow E_i$ be the restrictions of the two projections \(E_1\times E_2\to E_i\).
The differentials of these maps are the restrictions of the natural
projections $\operatorname{Lie}(E_1)\oplus \operatorname{Lie}(E_2)
        \longrightarrow \operatorname{Lie}(E_i)$.

 If $pr_1:H_u\to E_1$ were the zero map, then
\[
V_u=\operatorname{Lie} H_u \subset 0\times \operatorname{Lie} E_2.
\]
This contradicts the fact that the first component of $V_u$ is nonzero.

 Since \(H\) is a
one-dimensional connected algebraic subgroup of an abelian variety,
\(H\) is an elliptic curve.  Thus \(p_1\) and \(p_2\) are isogenies over
\(\overline{\mathbb Q}\).

It remains to prove that this isogeny descends to $K$.  Recall that
\[
        V_u=\overline{\mathbb Q}\cdot (e_1,e_2)
\]
is stable under \(G_K\), because \(e_1\) and \(e_2\) are
defined over $K$. Therefore, for every
\(\sigma\in \mathrm{Gal}(\overline{\Bbb{Q}}/K)\), we have
\[
        \operatorname{Lie}({}^{\sigma}H)
        =
        \sigma(\operatorname{Lie}(H))
        =
        \sigma(V_u)
        =
        V_u
        =
        \operatorname{Lie}(H).
\] 

Since \(E_1\times E_2\) is an abelian variety over a field of
characteristic zero, its connected algebraic subgroups are determined by
their Lie algebras (see [Proposition 13.11 of \cite{milne}]).

  Since $G$ is defined over $K$, we identify the ambient group
${}^{\sigma}G$ with $G$. Hence
\[
        {}^{\sigma}H=H
        \qquad
        \text{for all }\sigma\in \mathrm{Gal}(\overline{\mathbb{Q}}/K).
\]

Thus \(H\) is \(G_K\)-stable, and hence descends to an
algebraic subgroup of \(E_1\times E_2\) defined over $K$.

Consequently, the projections $p_i:H\longrightarrow E_i$ are isogenies defined over $K$. For instance, if
\(\widehat{p_1}:E_1\to H\) denotes the dual isogeny of \(p_1\), then
\[
        \varphi:=p_2\circ \widehat{p_1}:E_1\longrightarrow E_2
\]
is a nonzero homomorphism defined over $K$.  Therefore
\(E_1\) and \(E_2\) are isogenous over $K$.\end{proof}

\begin{theorem}\label{periodmain}

Let $K$ be a number field, and let $E_1$ and $E_2$ be elliptic curves
over $K$. For each $i=1,2$, fix a nonzero invariant differential
$\omega_{E_i}$ on $E_i$.

Fix a real embedding
\[
\sigma\colon K\hookrightarrow\mathbb{R}.
\]
Let $E_{i,\sigma}$ denote the elliptic curve over $\mathbb{R}$ obtained
from $E_i$ by base change via $\sigma$, and let
$\omega_{E_{i,\sigma}}$ denote the invariant differential on
$E_{i,\sigma}$ induced by $\omega_{E_i}$.

Define
\[
\Omega_\sigma(E_i)
:=
\int_{E_{i,\sigma}(\mathbb{R})}
\left|\omega_{E_{i,\sigma}}\right|.
\]

Then
\[
\frac{\Omega_\sigma(E_1)}
     {\Omega_\sigma(E_2)}
\in \sigma(K^\times)
\]
if and only if $E_1$ and $E_2$ are isogenous over $K$.
\end{theorem}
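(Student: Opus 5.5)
The proof splits into the two directions.

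\emph{Isogenous implies the period ratio lies in $\sigma(K^\times)$.} Suppose $\varphi\colon E_1\to E_2$ is a nonzero isogeny over $K$. Then $\varphi^*\omega_{E_2}=c\,\omega_{E_1}$ for some $c\in K^\times$, since both sides are invariant differentials on $E_1$ defined over $K$ and $\varphi$ is étale in characteristic zero, so the pullback is nonzero. After base change by $\sigma$, $\varphi_\sigma$ restricts to a map $E_{1,\sigma}(\mathbb R)\to E_{2,\sigma}(\mathbb R)$. Its image is a finite-index subgroup containing the identity component, so the image covers either all of $E_{2,\sigma}(\mathbb R)$ or its identity component. The fibres of this map are cosets of $\ker\varphi_\sigma(\mathbb R)$. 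The change-of-variables formula then gives
\[
|\sigma(c)|\,\Omega_\sigma(E_1)=\frac{\#\ker\varphi_\sigma(\mathbb R)}{[E_{2,\sigma}(\mathbb R):\varphi_\sigma(E_{1,\sigma}(\mathbb R))]}\,\Omega_\sigma(E_2).
\]
This is the argument behind Proposition~\ref{isogenousperiod}, now carried out with arbitrary $K$-rational differentials. The factor on the right is a positive rational number, and $|\sigma(c)|=\pm\sigma(c)\in\sigma(K^\times)$. Hence the ratio lies in $\sigma(K^\times)$.

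\emph{Period ratio in $\sigma(K^\times)$ implies isogenous.} Write $\Omega_\sigma(E_1)/\Omega_\sigma(E_2)=\sigma(\lambda)$ with $\lambda\in K^\times$. Identify $K$ with $\sigma(K)\subset\mathbb R\subset\mathbb C$, so that the base changes $E_{i,\sigma}$ are the curves to which Proposition~\ref{commonperiod} applies. By Lemma~\ref{plusminus}, $\Omega_\sigma(E_i)=c_i\,\Omega^+_i$ with $c_i\in\{1,2\}$, where $\Omega^+_i=\left|\int_{r_i^+}\omega_{E_i}\right|$ is a nonzero element of $\Lambda(E_i,\omega_{E_i})$ up to sign, by Proposition~\ref{delicate}.

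Now replace $\omega_{E_1}$ by $\omega_1':=\lambda^{-1}c_2c_1^{-1}\omega_{E_1}$, which is still a nonzero $K$-rational invariant differential. The lattice scales as $\Lambda(E_1,\omega_1')=\lambda^{-1}c_2c_1^{-1}\Lambda(E_1,\omega_{E_1})$. Therefore this lattice contains
\[
\lambda^{-1}\frac{c_2}{c_1}\,\Omega_1^+=\frac{c_2}{c_1}\cdot\frac{\Omega_\sigma(E_2)}{c_1}\cdot\frac{c_1}{1}\cdot\frac{1}{c_2}\cdot\frac{1}{\cdots}
\]
up to harmless factors of $2$ and signs. To avoid this bookkeeping, multiply through by $2$: the number $\Omega:=2\,\Omega_\sigma(E_2)$ lies in both $\Lambda(E_2,\omega_{E_2})$ and $\Lambda(E_1,2\lambda^{-1}\cdots)$. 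Concretely, $2\Omega^+_i$ and $\Omega_\sigma(E_i)\cdot(2/c_i)$ are lattice vectors, so $2\Omega_\sigma(E_i)$ lies in $\Lambda(E_i,\omega_{E_i})$ for both $i$. Setting $\omega_1'':=\lambda^{-1}\omega_{E_1}$, we get $2\Omega_\sigma(E_2)=2\lambda^{-1}\Omega_\sigma(E_1)\in\Lambda(E_1,\omega_1'')$, and also $2\Omega_\sigma(E_2)\in\Lambda(E_2,\omega_{E_2})$. This common period is nonzero, so Proposition~\ref{commonperiod} gives that $E_1$ and $E_2$ are isogenous over $K$.

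\emph{Main obstacle.} The delicate point is matching the Galois descent in Proposition~\ref{commonperiod}, which is stated for $K\subset\overline{\mathbb Q}\subset\mathbb C$, with the abstract embedding $\sigma$. I would handle this by choosing the embedding $\overline{\mathbb Q}\hookrightarrow\mathbb C$ to extend $\sigma$. The descent argument uses only that $e_1$ and $e_2$ are $K$-rational, so it still yields an isogeny defined over $K$. A second, minor point is the sign: $\Omega^+$ equals $\pm\int_{r^+}\omega$, and this is harmless because lattices are symmetric under negation.
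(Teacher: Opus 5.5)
Your proposal is correct and follows the paper's proof. The converse is the change-of-variables computation behind Proposition~\ref{isogenousperiod}, which the paper simply cites. The forward direction rescales one $K$-rational differential to produce a nonzero common real period, then applies Proposition~\ref{commonperiod} after identifying $K$ with $\sigma(K)$; the paper rescales $\omega_{E_2}$ by $a$ where you rescale $\omega_{E_1}$ by $\lambda^{-1}$. You should delete the abandoned display with ``$\cdots$'', and the factor $2$ is unnecessary: $c_i\in\{1,2\}$ is an integer, so $\Omega_\sigma(E_i)=c_i\Omega_i^+$ already lies in $\Lambda(E_{i,\sigma},\omega_{E_{i,\sigma}})$.
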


\begin{proof}
Suppose first that $\frac{\Omega_\sigma(E_1)}
     {\Omega_\sigma(E_2)}
\in \sigma(K^\times)$.
Since this quotient is positive, there exists $a\in K^\times$ such that
$\sigma(a)>0$ and $\Omega_\sigma(E_1)
=
\sigma(a)\Omega_\sigma(E_2)$.
For each $i=1,2$, choose a generator $\gamma_i^+
\in
H_1(E_{i,\sigma}(\mathbb{C}),\mathbb{Z})^+$
such that $\int_{\gamma_i^+}\omega_{E_{i,\sigma}}>0$. Then
\[
\Omega_\sigma(E_i)
=
c_{i,\sigma}
\int_{\gamma_i^+}\omega_{E_{i,\sigma}},
\qquad
c_{i,\sigma}
=
[E_{i,\sigma}(\mathbb{R})
 :
 E_{i,\sigma}(\mathbb{R})^0]
\in\{1,2\}.
\]
Hence $\Omega_\sigma(E_i)
\in
\Lambda(E_{i,\sigma},\omega_{E_{i,\sigma}})$. Moreover,
\[
\Lambda
\bigl(
E_{2,\sigma},
\sigma(a)\omega_{E_{2,\sigma}}
\bigr)
=
\sigma(a)
\Lambda
\bigl(
E_{2,\sigma},
\omega_{E_{2,\sigma}}
\bigr).
\]
Therefore, the nonzero real number $\Omega_\sigma(E_1)
=
\sigma(a)\Omega_\sigma(E_2)$
belongs to $\Lambda
\bigl(
E_{1,\sigma},
\omega_{E_{1,\sigma}}
\bigr)
\cap
\Lambda
\bigl(
E_{2,\sigma},
\sigma(a)\omega_{E_{2,\sigma}}
\bigr)$. Thus, $E_{1,\sigma}$ and $E_{2,\sigma}$ have a nonzero common period
with respect to invariant differentials defined over $\sigma(K)$.
By Proposition~\ref{commonperiod}, they are isogenous over $\sigma(K)$.
Transporting this isogeny via
\[
\sigma^{-1}\colon \sigma(K)\longrightarrow K,
\]
we conclude that $E_1$ and $E_2$ are isogenous over $K$.

Conversely, suppose that $E_1$ and $E_2$ are isogenous over $K$, and let $\phi\colon E_1\longrightarrow E_2$
be a $K$-isogeny. Write $\phi^*\omega_{E_2}
=
a\omega_{E_1}$ 
for some $a\in K^\times$, and let $\phi_\sigma\colon
E_{1,\sigma}(\mathbb{R})
\longrightarrow
E_{2,\sigma}(\mathbb{R})$
be the induced homomorphism on real points.

By \cite[Lemma 7.4]{dokchitser},
\[
\frac{\Omega_\sigma(E_2)}
     {\Omega_\sigma(E_1)}
=
\frac{\#\operatorname{Coker}(\phi_\sigma)}
     {\#\operatorname{Ker}(\phi_\sigma)}
\left|\sigma(a)\right|
\in \sigma(K^\times).
\]
Hence
\[
\frac{\Omega_\sigma(E_1)}
     {\Omega_\sigma(E_2)}
\in \sigma(K^\times),
\]
which proves the converse.\end{proof}

\begin{example}
By Kenku's theorem \cite{Kenku}, each $\mathbb{Q}$-isogeny class
contains at most eight $\mathbb{Q}$-isomorphism classes of elliptic
curves. Hence, at most eight pairwise non-isomorphic elliptic curves
over $\mathbb{Q}$ can share the same real period.

The six curves
\texttt{98.a1}, \texttt{98.a2}, \texttt{98.a3},
\texttt{98.a4}, \texttt{98.a5}, and \texttt{98.a6}
are pairwise non-isomorphic over $\mathbb{Q}$ and have the same
real period. 
\end{example}

\begin{lemma}\label{lonely}
For every $\mathcal{E}>0$,
\[
\{\Omega_E\}\cap[\mathcal{E},\infty)
\]
is finite. 
\end{lemma}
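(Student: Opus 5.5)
We prove the finiteness of $\{\Omega_E\}\cap[\mathcal{E},\infty)$ by bounding the naive height of a global minimal model of $E$ in terms of $\mathcal{E}$. Only finitely many minimal Weierstrass equations have bounded coefficients, so this gives the claim. Work with $E$ over $\mathbb{Q}$ and its global minimal model with Néron differential $\omega_{\min}$. Change to a short Weierstrass model $y^2=4x^3-g_2x-g_3$ with $g_2=c_4/12$, $g_3=c_6/216$, and $\omega=dx/y$; this scales $\omega_{\min}$ by a bounded constant (a power of $2$ and $3$). The period lattice $\Lambda=\Lambda(E,\omega_{\min})$ of Proposition~\ref{delicate} then satisfies $g_2(\Lambda)=c_4/12$ and $g_3(\Lambda)=c_6/216$ up to these fixed factors. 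By Lemma~\ref{plusminus}, $\Omega_E\le 2\Omega_E^+$, and $\Omega_E^+$ is the smallest positive real element of $\Lambda$ (or twice a generator-type element, up to factor $2$).

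\textbf{Key step.} Show that a large real period forces the invariants $c_4,c_6$ to be small. Write $\Lambda=\Omega^+(\mathbb{Z}+\mathbb{Z}\tau)$ for a suitable $\tau$ in the upper half-plane; for real lattices one may take $\tau$ with real part $0$ or $1/2$. By homogeneity,
\[
c_4\asymp (\Omega^+)^{-4}E_4(\tau),\qquad c_6\asymp(\Omega^+)^{-6}E_6(\tau),\qquad \Delta_E\asymp(\Omega^+)^{-12}\Delta(\tau).
\]
Since $\Delta_E$ is a nonzero integer, $|\Delta(\tau)|\gg(\Omega^+)^{12}\ge(\mathcal{E}/2)^{12}$. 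Now $|\Delta(\tau)|\le C\,e^{-2\pi\,\mathrm{Im}\tau}$ for $\mathrm{Im}\tau$ bounded below. However, $\tau$ here is not reduced, since $\Omega^+$ is fixed as the real period. So I would control $\tau$ differently: the $\mathbb{Z}$-rank-one sublattice $\Lambda\cap\mathbb{R}$ is generated by $\Omega^+$ (or $\Omega^+$ up to factor $2$), and the covolume of $\Lambda$ is $\Omega^+\cdot|\Omega^-|$ (up to $2$). This reduces the problem to showing that both $|\Omega^-|$ and $\Omega^+$ are controlled.

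\textbf{Cleaner route, which I would try first.} Use the elementary inequality $\mathrm{covol}(\Lambda)\ge c\,|\Delta_E|^{-1/6}$ together with $|\Delta_E|\ge1$. This gives an upper bound for $\mathrm{covol}$, and it is the heart of the matter. Modular invariance gives $|\Delta(\tau)|(\mathrm{Im}\tau)^6\le M$ for an absolute $M$, because this quantity is $\mathrm{SL}_2(\mathbb{Z})$-invariant and bounded on the fundamental domain. Hence $|\Delta_E|\cdot\mathrm{covol}(\Lambda)^6\le M'$, so $\mathrm{covol}(\Lambda)\le M''$.

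\textbf{Bounding $c_4,c_6$.} The real element $\Omega^+\ge\mathcal{E}/2$ lies in $\Lambda$, and $\mathrm{covol}$ is bounded. Let $\lambda_1$ be the shortest vector; then $\lambda_1\le\Omega^+$ trivially. To bound $c_4$ and $c_6$ I need $\lambda_1$ bounded \emph{below}. Since $\Lambda$ is the image of $H_1$ under $\int\omega$ with $\omega$ rational, I would use the classical fact that $c_4,c_6$ are bounded by $\lambda_1^{-4},\lambda_1^{-6}$ times absolute constants, via the Eisenstein series over the reduced basis. A lower bound on $\lambda_1$ follows from $\lambda_1\cdot\lambda_2\asymp\mathrm{covol}$ combined with $\lambda_2\le\Omega^+$ or $\Omega^-$.

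\textbf{Main obstacle.} The difficulty is excluding a very short vector $\lambda_1$ alongside the long real vector. In that degenerate case $\tau\to i\infty$ with $\Omega^+$ large, which forces $\lambda_1\approx\Omega^+/|q|$-scaled. I expect to handle it through integrality: $|j|=|c_4^3/\Delta_E|$ and $|\Delta_E|\asymp\lambda_1^{-12}|q|$ with $q=e^{2\pi i\tau}$, so $\lambda_1\gg|q|^{1/12}$. Since $\Omega^+\le 2\lambda_2\asymp\mathrm{covol}/\lambda_1$ and $\mathrm{covol}\asymp\lambda_1^2\,\mathrm{Im}\tau$, we get $\Omega^+\ll\lambda_1\log(1/|q|)$. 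Also $\lambda_1\ll|\Delta_E|^{-1/12}|q|^{1/12}\le|q|^{1/12}$, so $\Omega^+\ll|q|^{1/12}\log(1/|q|)$, which is bounded. This bound also shows that $|q|$ is bounded below once $\Omega^+\ge\mathcal{E}/2$, so $\tau$ is bounded. Then $\lambda_1\asymp\Omega^+$ up to constants depending on $\mathcal{E}$, giving $|c_4|,|c_6|\ll_{\mathcal{E}}1$. Finitely many integral pairs $(c_4,c_6)$ with $1728\Delta=c_4^3-c_6^2\neq0$ remain, so $\{\Omega_E\}\cap[\mathcal{E},\infty)$ is finite.
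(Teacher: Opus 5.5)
Your argument is correct, but it takes a different route from the paper's. The paper's proof is one line: it quotes the estimate $\Omega_E\ll H(E)^{-1/12}\log H(E)$ from Corollary 6.1.3 of \cite{poonen}. So $\Omega_E\ge\mathcal{E}$ bounds the naive height, and only finitely many curves remain.

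You instead derive an estimate of this shape directly from the N\'eron period lattice. Write $\Lambda=\lambda_1(\mathbb{Z}+\mathbb{Z}\tau)$ with $\tau$ reduced. Then:
\begin{itemize}
\item integrality $|\Delta_E|\ge 1$, together with $|\Delta(\tau)|\asymp|q|$ on the fundamental domain, gives $|\lambda_1|\ll|q|^{1/12}$;
\item the real structure gives $\Omega_E^+\le 2\lambda_2\ll|\lambda_1|\,\mathrm{Im}\,\tau$;
\item hence $\Omega_E^+\ll|q|^{1/12}\log(1/|q|)$.
\end{itemize}
So $\Omega_E^+\ge\mathcal{E}/2$ confines $\tau$ to a compact region and forces $|\lambda_1|\gg_{\mathcal{E}}1$. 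This bounds the integers $c_4,c_6$, which determine $E$ up to $\mathbb{Q}$-isomorphism.

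The paper's proof is shorter but rests on an external estimate. Yours is self-contained and effective. It also shows exactly where the real structure of $\Lambda$ and the integrality of $\Delta_E$ enter, and it recovers the $H^{-1/12}\log H$ shape, with the logarithm coming from $\mathrm{Im}\,\tau$ near the cusp.

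In a write-up you should justify $\Omega_E^+\le 2\lambda_2$. This is the one place where the real structure is essential: for an arbitrary lattice, the primitive real vector can be far longer than $\lambda_2$. The argument is short. Complex conjugation preserves $\Lambda$, so $2\,\mathrm{Re}(v)=v+\bar v\in\Lambda\cap\mathbb{R}=\mathbb{Z}\Omega_E^+$ for every $v\in\Lambda$. Hence every lattice vector that is not purely imaginary has length at least $\Omega_E^+/2$, and two independent vectors cannot both be purely imaginary.

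You should also fix some slips in the direction of inequalities:
\begin{itemize}
\item the covolume inequality should read $\mathrm{covol}(\Lambda)\le c\,|\Delta_E|^{-1/6}$, not $\ge$;
\item $|\Delta_E|\ge 1$ yields $|\lambda_1|\ll|q|^{1/12}$, not $\gg$;
\item the sentence obtaining a lower bound for $\lambda_1$ from $\lambda_1\lambda_2\asymp\mathrm{covol}$ does not work on its own, since nothing bounds the covolume from below.
\end{itemize}
Your final paragraph uses the correct directions and makes the covolume detour unnecessary, so these are presentation issues rather than gaps. You can simply delete that detour.
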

\begin{proof}
Let $H(E)$ denote the naive height of $E$ used in \cite{poonen}.
By Corollary 6.1.3 of \cite{poonen}, we have $\Omega_E \ll H(E)^{-1/12}\log H(E)$. Hence, if $\Omega_E \ge \mathcal{E}>0$, then $\mathcal{E} \ll H(E)^{-1/12}\log H(E)$.
Since $H^{-1/12}\log H \longrightarrow 0
\qquad (H\to\infty)$, it follows that $H(E)$ is bounded. Therefore, there are only finitely many such elliptic curves up to $\mathbb{Q}$-isomorphism.\end{proof}

\begin{theorem}\label{rounded}
Let $a \in \mathbb{Q}_{>0}$ and let $E_1/\mathbb{Q}$ be an elliptic curve.
Then there exists $\varepsilon(E_1,a)>0$ such that, for every elliptic curve
$E_2/\mathbb{Q}$, if
\[
\left|
\frac{\Omega_{E_1}}{\Omega_{E_2}}-a
\right|
<\varepsilon(E_1,a),
\]
then $\dfrac{\Omega_{E_1}}{\Omega_{E_2}}=a$. In particular, $E_1$ and $E_2$ are isogenous over $\mathbb{Q}$.
\end{theorem}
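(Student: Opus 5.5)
The idea is to use the finiteness statement of Lemma~\ref{lonely}. If $\Omega_{E_1}/\Omega_{E_2}$ is close to $a$, then $\Omega_{E_2}$ is bounded below, so only finitely many curves are possible. Among these, every curve whose ratio differs from $a$ stays a positive distance away. Concretely, I would first restrict to the condition $|\Omega_{E_1}/\Omega_{E_2}-a|<a/2$. This forces $\Omega_{E_1}/\Omega_{E_2}<3a/2$, that is,
\[
\Omega_{E_2}>\frac{2\,\Omega_{E_1}}{3a}=:\mathcal{E}>0 .
\]
Since $\Omega_{E}$ depends only on the $\mathbb{Q}$-isomorphism class of $E$ (it is computed from the Néron differential of a global minimal model), we may speak of the set of real periods. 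By Lemma~\ref{lonely}, applied with this $\mathcal{E}$, the set
\[
S:=\{\Omega_E\}\cap[\mathcal{E},\infty)
\]
is finite. Its proof in fact bounds the naive height, so only finitely many isomorphism classes of $E_2$ can occur.

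Next, consider the finite set of values
\[
R:=\{\Omega_{E_1}/\omega : \omega\in S,\ \Omega_{E_1}/\omega\neq a\}.
\]
If $R$ is empty, put $\delta=\infty$. Otherwise let
\[
\delta:=\min_{r\in R}|r-a|,
\]
which is strictly positive because $R$ is finite and excludes $a$. Define $\varepsilon(E_1,a):=\min(a/2,\delta)$; this depends only on $E_1$ and $a$.

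Now suppose $|\Omega_{E_1}/\Omega_{E_2}-a|<\varepsilon(E_1,a)$. Since $\varepsilon(E_1,a)\le a/2$, we get $\Omega_{E_2}\in S$. If the ratio were different from $a$, it would lie in $R$, so its distance to $a$ would be at least $\delta\ge\varepsilon(E_1,a)$, a contradiction. Hence $\Omega_{E_1}/\Omega_{E_2}=a\in\mathbb{Q}^\times$. Theorem~\ref{periodmain}, with $K=\mathbb{Q}$ and $\sigma$ the identity, then gives that $E_1$ and $E_2$ are isogenous over $\mathbb{Q}$.

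The one step needing care is the finiteness input. Lemma~\ref{lonely} relies on Corollary 6.1.3 of \cite{poonen}, the bound $\Omega_E\ll H(E)^{-1/12}\log H(E)$ for minimal models. It must give finiteness of the \emph{set of values} of $\Omega_E$ above $\mathcal{E}$, and this follows from the finiteness of curves of bounded naive height. Given that, the argument is just the observation that a finite set of real numbers has a positive minimum distance to any point outside it.
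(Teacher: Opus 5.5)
Your proof is correct and follows the paper's argument essentially step for step. Like the paper, you restrict to $|\Omega_{E_1}/\Omega_{E_2}-a|<a/2$ to bound $\Omega_{E_2}$ from below, use Lemma~\ref{lonely} to reduce to finitely many possible ratios, and take $\varepsilon$ as the minimum of $a/2$ and the distance from $a$ to the other ratios (the paper uses half this distance, which makes no difference), before invoking Theorem~\ref{periodmain} for the isogeny.
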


\begin{proof}

\[
\left|
\frac{\Omega_{E_1}}{\Omega_{E_2}}-a
\right|
<
\frac{a}{2}
\quad\Longrightarrow\quad
\frac{2\Omega_{E_1}}{3a}
<
\Omega_{E_2}
<
\frac{2\Omega_{E_1}}{a}.
\]

Thanks to Lemma \ref{lonely}, the set $T_a:=
\left\{
\frac{\Omega_{E_1}}{\Omega_E}
\;\middle|\;
E/\mathbb{Q}\text{ is an elliptic curve}, \frac{a}{2}
<
\frac{\Omega_{E_1}}{\Omega_E}
<
\frac{3a}{2}
\right\}$ is finite.

By choosing $\varepsilon$ small enough,
\[
\varepsilon(E_1,a)
:=
\min\left\{
\frac{a}{2},
\frac{\delta}{2}
\right\}, \text{where }
\delta :=\min\left(
\{a\}\cup
\left\{|x-a| \;\middle|\; x\in T_a\setminus\{a\}\right\}
\right).
\]

Then $T_a\cap(a-\varepsilon,a+\varepsilon)
\subseteq
\{a\}$. Thus, we have $\frac{\Omega_{E_1}}{\Omega_{E_2}}=a$.\end{proof}

\begin{remark}\label{counter}

In this remark, since $K$ is totally real, we have
\[
\Omega_{E/K}:=
\prod_{\sigma:K\hookrightarrow \mathbb{R}}\Omega_{\sigma}(E_{\sigma}),
\qquad
\Omega_{\sigma}(E):=
\int_{E_\sigma(\mathbb{R})}
\left|\omega_{E_{\sigma}}\right|.
\]

Here, $E_{\sigma}$ denotes the elliptic curve over $\Bbb{R}$ obtained from $E/K$ by base change via the embedding $\sigma:K\hookrightarrow \Bbb{R}$.

For a general number field, this statement does not hold. Indeed, there exist
examples of non-isogenous elliptic curves sharing the same real period.

Let $K=\mathbb{Q}(\phi)=\mathbb{Q}(\sqrt{5})$, where
$\phi=(1+\sqrt{5})/2$ is the golden ratio. Let
\[
E_1:\quad y^2=x^3+(\phi-1)x^2-2x-2\phi+4
\]
and
\[
E_2:\quad y^2=x^3-\phi x^2-2x+2\phi+2
\]
be elliptic curves over $K$. These models are global minimal models. Their LMFDB labels are $\texttt{2.2.5.1-4096.1-k3}
\ \text{and}\ 
\texttt{2.2.5.1-4096.1-n1}$,
respectively.

According to the LMFDB, these two curves belong to the distinct
isogeny classes \texttt{4096.1-k} and \texttt{4096.1-n}. Therefore, they are
not isogenous over $K$. To see this independently of the LMFDB labels, consider the prime $\mathfrak p=(29,\varphi-6)$
of \(K\). Since \(\varphi^2-\varphi-1\equiv 0 \bmod{29}\) for
\(\varphi\equiv 6\), the residue field is \(\mathbb F_{29}\).
Reducing the two curves modulo \(\mathfrak p\), one obtains $\#E_1(\mathbb F_{29})=36,
\#E_2(\mathbb F_{29})=24$.
Thus, $a_{\mathfrak p}(E_1)=29+1-36=-6,
a_{\mathfrak p}(E_2)=29+1-24=6$. If \(E_1\) and \(E_2\) were \(K\)-isogenous, then at every prime of good
reduction their reductions would have the same number of points over the
residue field. This contradiction shows that \(E_1\) and \(E_2\) are not
isogenous over \(K\).

On the other hand, they have the same real period. Let $\tau$ be the nontrivial
element of $\operatorname{Gal}(K/\mathbb{Q})$, so that $\tau(\phi)=1-\phi$.
Then
\[
E_1={}^{\tau}E_2.
\]
Let $\sigma_1,\sigma_2:K\hookrightarrow\mathbb{R}$ be the two real embeddings,
with $\sigma_2=\sigma_1\circ\tau$. Then
\[
\begin{aligned}
{}^{\sigma_1}E_1
&={}^{\sigma_1}\bigl({}^{\tau}E_2\bigr)\\
&={}^{\sigma_1\circ\tau}E_2\\
&={}^{\sigma_2}E_2,
\end{aligned}
\]
and similarly ${}^{\sigma_2}E_1={}^{\sigma_1}E_2$. Hence the action of $\tau$
interchanges the two local real periods. Therefore
\[
\begin{aligned}
\Omega_{E_1/K}
&=
\int_{{}^{\sigma_1}E_1(\mathbb{R})}
\left|{}^{\sigma_1}\omega_1\right|
\cdot
\int_{{}^{\sigma_2}E_1(\mathbb{R})}
\left|{}^{\sigma_2}\omega_1\right| \\
&=
\int_{{}^{\sigma_2}E_2(\mathbb{R})}
\left|{}^{\sigma_2}\omega_2\right|
\cdot
\int_{{}^{\sigma_1}E_2(\mathbb{R})}
\left|{}^{\sigma_1}\omega_2\right| \\
&=
\Omega_{E_2/K},
\end{aligned}
\]
where $\omega_i$ denotes the standard invariant differential on $E_i$. Thus
$E_1$ and $E_2$ give an example of non-isogenous elliptic curves over $K$ with
the same real period.

\end{remark}

The $L$-functions and the periods agree after interchanging places and embeddings.
However, if we define agreement of $L$-functions in the sense of
``when the local factors of their Euler products agree at every place,''
then they are different.
Likewise, for the periods, if we fix an embedding, then being isogenous over $K$
is forced, as in Theorem \ref{periodmain}.
As the following proposition shows, every BSD twin arises from an isogeny together with
conjugation.
However, if we interpret BSD twins in terms of the strong agreement of $L$-functions in the
sense of ``when the local factors of their Euler products agree at every place,''
or in terms of the strong agreement of periods in Theorem \ref{periodmain},
then the only BSD twins are those coming from an isogeny.

\begin{proposition}\label{possibility}
Let $K/\mathbb{Q}$ be a finite Galois extension, and let $A_1$ and
$A_2$ be simple abelian varieties over $K$. Then
\[
L(A_1/K,s)=L(A_2/K,s)
\]
if and only if
\[
A_1\sim_K {}^\tau A_2
\]
for some $\tau\in\operatorname{Gal}(K/\mathbb{Q})$.
\end{proposition}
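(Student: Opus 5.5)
Both directions pass through the compatible system of $\ell$-adic Galois representations together with Faltings's isogeny theorem; the Galois action on $K$ enters only through the Euler factors at primes that $\tau$ permutes. Since $K/\mathbb{Q}$ is Galois, we identify ${}^\tau A_2$ with the base change of $A_2$ along $\tau\colon K\to K$. Its Frobenius at a prime $\mathfrak p$ of $K$ corresponds to the Frobenius of $A_2$ at $\tau^{-1}\mathfrak p$, so
\[
L_{\mathfrak p}({}^\tau A_2,s)=L_{\tau^{-1}\mathfrak p}(A_2,s).
\]
In particular ${}^\tau A_2$ has good reduction at $\mathfrak p$ if and only if $A_2$ does at $\tau^{-1}\mathfrak p$.

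For the ``if'' direction, first note that isogenous abelian varieties over $K$ have isomorphic rational $\ell$-adic Tate modules, hence the same Euler factors at every prime, so $L(A_1/K,s)=L({}^\tau A_2/K,s)$. Next, reindexing the Euler product over primes of $K$ by $\mathfrak p\mapsto\tau^{-1}\mathfrak p$ preserves the norm $N\mathfrak p$, because $\tau$ is an automorphism of $K$ over $\mathbb{Q}$. Together with the displayed identity this gives $L({}^\tau A_2/K,s)=L(A_2/K,s)$. At bad primes one compares inertia invariants, which $\tau$ also transports.

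For the ``only if'' direction, suppose $L(A_1/K,s)=L(A_2/K,s)$. The plan has four steps.
\begin{enumerate}
\item Note that equality of $L$-functions gives equality of Dirichlet series $\sum a_n n^{-s}$ over $\mathbb{Q}$. It does not directly give equality of Euler factors prime-by-prime in $K$. What it does give is that, for each rational prime $p$, the product $\prod_{\mathfrak p\mid p} L_{\mathfrak p}(A_1,s)$ equals $\prod_{\mathfrak p\mid p}L_{\mathfrak p}(A_2,s)$.
\item Pass to the induced representations: set $W_i=\operatorname{Ind}_{G_K}^{G_{\mathbb{Q}}} V_\ell(A_i)$, which is the Tate module of the Weil restriction $\operatorname{Res}_{K/\mathbb{Q}}A_i$. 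The equality above says that $W_1$ and $W_2$ have the same Frobenius characteristic polynomials at almost all $p$. Both are semisimple by Faltings, so Chebotarev gives $W_1\cong W_2$.
\item Restrict back to $G_K$ using Mackey's formula, which applies because $K/\mathbb{Q}$ is Galois:
\[
\operatorname{Res}_{G_K}W_i\cong\bigoplus_{\tau\in\operatorname{Gal}(K/\mathbb{Q})} V_\ell({}^\tau A_i).
\]
Each ${}^\tau A_i$ is simple, so by Faltings $\operatorname{End}_{G_K}V_\ell({}^\tau A_i)=\operatorname{End}({}^\tau A_i)\otimes\mathbb{Q}_\ell$. Hence the simple $G_K$-summands of $V_\ell(A_1)$ all lie in the isotypic type associated to the isogeny class of $A_1$.
\item Since $V_\ell(A_1)$ appears in $\bigoplus_\tau V_\ell({}^\tau A_2)$, it shares an irreducible constituent with some $V_\ell({}^\tau A_2)$. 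Then $\operatorname{Hom}_{G_K}(V_\ell(A_1),V_\ell({}^\tau A_2))\neq0$. By Faltings, $\operatorname{Hom}_K(A_1,{}^\tau A_2)\otimes\mathbb{Q}_\ell\neq0$. A nonzero homomorphism between simple abelian varieties is an isogeny, so $A_1\sim_K{}^\tau A_2$.
\end{enumerate}

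I expect the main obstacle to be step 2, the translation between $L$-functions over $K$ and Frobenius data. The $L$-function over $K$ only sees the products of local factors over the primes above each $p$, and the point is to recognize this as exactly the Frobenius data of the induced representation. The Euler factor of $\operatorname{Ind}$ at $p$ equals $\prod_{\mathfrak p\mid p}L_{\mathfrak p}$, with the variable substitution $N\mathfrak p^{-s}=p^{-f s}$ handled by the standard induction formula for Artin-type $L$-functions. I would check this carefully at unramified good primes only, since those suffice for Chebotarev. Two further points need care: the dimension count (equal $L$-functions force equal degree and hence $\dim A_1=\dim A_2$), and the fact that simplicity of $A_1$ is used to pass from one shared constituent to a full isogeny.
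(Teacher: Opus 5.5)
Your proposal is correct and is essentially the paper's proof written in terms of $\ell$-adic Tate modules: $\operatorname{Ind}_{G_K}^{G_{\mathbb{Q}}}V_\ell(A_i)=V_\ell(\operatorname{Res}_{K/\mathbb{Q}}A_i)$, your Chebotarev-plus-semisimplicity step corresponds to the paper's use of Faltings's isogeny theorem to get $\operatorname{Res}_{K/\mathbb{Q}}A_1\sim_{\mathbb{Q}}\operatorname{Res}_{K/\mathbb{Q}}A_2$, Mackey's formula corresponds to the decomposition $(\operatorname{Res}_{K/\mathbb{Q}}A_i)_K\simeq\prod_{\sigma}{}^\sigma A_i$, and your step 4 matches the paper's projection argument ending with ``a nonzero homomorphism between simple abelian varieties is an isogeny.'' The isotypic remark at the end of your step 3 and the dimension count are never used, so you can drop them.
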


\begin{proof}
Suppose first that $L(A_1/K,s)=L(A_2/K,s)$. Put $R_i:=\operatorname{Res}_{K/\mathbb{Q}}A_i (i=1,2)$. We have $L(R_i/\mathbb{Q},s)=L(A_i/K,s)$. Hence the assumption implies $L(R_1/\mathbb{Q},s)=L(R_2/\mathbb{Q},s)$.
By Faltings' isogeny theorem, we have $R_1\sim_{\mathbb{Q}}R_2$. After base change to $K$, we obtain a $K$-isogeny $(R_1)_K\sim_K(R_2)_K$. Since $K/\mathbb{Q}$ is Galois, there is an isomorphism of
$K$-algebras $K\otimes_{\mathbb{Q}}K \cong \prod_{\sigma\in\operatorname{Gal}(K/\mathbb{Q})}K$.
Therefore, by the base-change formula for Weil restriction and its
compatibility with finite products
\cite[2.59--2.61]{MilneAG}, we obtain a $K$-isomorphism $\left(\operatorname{Res}_{K/\mathbb{Q}}A_i\right)_K
\simeq_K
\prod_{\sigma\in\operatorname{Gal}(K/\mathbb{Q})}
{}^\sigma A_i$, where ${}^\sigma A_i
:=
A_i\times_{\operatorname{Spec}K,\,\sigma}\operatorname{Spec}K$ 
denotes the $\sigma$-conjugate of $A_i$. Thus, we have a $K$-isogeny
\[
\Phi:
\prod_{\sigma\in\operatorname{Gal}(K/\mathbb{Q})}
{}^\sigma A_1
\longrightarrow
\prod_{\sigma\in\operatorname{Gal}(K/\mathbb{Q})}
{}^\sigma A_2.
\]
Let $\iota_{\mathrm{id}}:
A_1\longrightarrow
\prod_{\sigma\in\operatorname{Gal}(K/\mathbb{Q})}
{}^\sigma A_1$
denote the inclusion into the factor corresponding to the identity
element. Since $\Phi$ has finite kernel, the composite
$\Phi\circ\iota_{\mathrm{id}}$ is nonzero. Therefore, there exists
some $\tau\in\operatorname{Gal}(K/\mathbb{Q})$ such that
\[
A_1
\xrightarrow{\ \iota_{\mathrm{id}}\ }
\prod_{\sigma\in\operatorname{Gal}(K/\mathbb{Q})}{}^\sigma A_1
\xrightarrow{\ \Phi\ }
\prod_{\sigma\in\operatorname{Gal}(K/\mathbb{Q})}{}^\sigma A_2
\xrightarrow{\ \operatorname{pr}_{\tau}\ }
{}^\tau A_2
\]
is nonzero. Since both $A_1$ and ${}^\tau A_2$ are simple abelian varieties over
$K$, every nonzero homomorphism between them is an isogeny. Hence $A_1\sim_K{}^\tau A_2$.

Conversely, suppose that $A_1\sim_K{}^\tau A_2$
for some $\tau\in\operatorname{Gal}(K/\mathbb{Q})$. Isogenous abelian
varieties have the same $L$-function, while Galois conjugation merely
permutes the places of $K$. Therefore, $L(A_1/K,s)
=
L({}^\tau A_2/K,s)
=
L(A_2/K,s)$.\end{proof}

\section{Recover the elliptic curve over $\Bbb{Q}$}

Since the real period determines the isogeny class, if two elliptic curves have
the same real period, then their \(L\) functions automatically coincide. The
equality of real periods is a stronger condition than the equality of
\(L\) functions.

\begin{proposition}\label{rectangler}
Let \(E\) and \(E'\) be elliptic curves over \(\mathbb Q\). Suppose that $c_{E/\mathbb R}=c_{E'/\mathbb R}$ and \[\Omega_E^+=\Omega_{E'}^+,
\Omega_E^-=\Omega_{E'}^-.\] 
Then they can be uniformized by the same lattice. 

In other words, there exist a lattice $\Lambda$
and an isomorphism $\phi$ defined over $\Bbb{Q}$ such that the following diagram is commutative.

\[
\begin{tikzcd}
\mathbb{C}/\Lambda
  \arrow[r, "\text{id}: z \mapsto z", "\cong"']
  \arrow[d, "\cong"']
&
\mathbb{C}/\Lambda
  \arrow[d, "\cong"]
\\
E(\mathbb{C})
  \arrow[r, "\phi"', "\cong"]
&
E'(\mathbb{C})
\end{tikzcd}
\]

\end{proposition}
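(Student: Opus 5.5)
Let $\omega$ and $\omega'$ be the Néron differentials of $E$ and $E'$. The plan is to show that the period lattices $\Lambda(E,\omega)$ and $\Lambda(E',\omega')$ of Proposition \ref{delicate} coincide. Then the Weierstrass invariants $g_2(\Lambda),g_3(\Lambda)$ coincide, which gives an isomorphism over $\mathbb{Q}$ compatible with the uniformizations.

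\textbf{Step 1: the shape of a real lattice.} For an elliptic curve over $\mathbb{R}$, complex conjugation acts on $H_1(E(\mathbb{C}),\mathbb{Z})$, so the lattice $\Lambda=\Lambda(E,\omega)$ is stable under complex conjugation. By the standard classification (Silverman, Advanced Topics V.2, or a direct argument with the $\pm$ eigenspaces):
\begin{itemize}
\item if $c_{E/\mathbb{R}}=2$, then $H_1=H_1^+\oplus H_1^-$ and $\Lambda=\mathbb{Z}\Omega^+_E\oplus\mathbb{Z}\Omega^-_E$ is rectangular;
\item if $c_{E/\mathbb{R}}=1$, then $H_1^+\oplus H_1^-$ has index $2$, and $\Lambda=\mathbb{Z}\Omega^+_E\oplus\mathbb{Z}\tfrac{1}{2}(\Omega^+_E+\Omega^-_E)$ is rhombic.
\end{itemize}
The point I need is that the component count singles out the lattice. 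If $\Lambda$ is rhombic, then $\tfrac12(\Omega^+_E+\Omega^-_E)$ maps to a point whose conjugate differs from it by $\Omega^+_E$, so only the identity component is real. Conversely, in the rectangular case the half-period $\tfrac12\Omega^-_E$ gives a real $2$-torsion point off the identity component. This is the step I expect to need the most care. In particular, I must check that $\Omega^+$ and $\Omega^-$ as defined via generators of $H_1^\pm$ really are the minimal real and imaginary periods in each case. This holds because $\Lambda\cap\mathbb{R}$ is the image of $H_1^+$ (an element $\gamma$ with real period satisfies $\int_{\overline\gamma}\omega=\overline{\int_\gamma\omega}=\int_\gamma\omega$, so $\overline\gamma=\gamma$ by injectivity of the period map), and similarly for $i\mathbb{R}$. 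Signs of the generators are irrelevant since the lattice is symmetric.

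\textbf{Step 2: equal data give equal lattices.} Since $c_{E/\mathbb{R}}=c_{E'/\mathbb{R}}$, $\Omega^+_E=\Omega^+_{E'}$ and $\Omega^-_E=\Omega^-_{E'}$ (the latter possibly up to sign, which is harmless), Step 1 gives $\Lambda(E,\omega)=\Lambda(E',\omega')=:\Lambda$.

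\textbf{Step 3: the isomorphism.} By the uniformization used in Proposition \ref{delicate}, $E$ and $E'$ are both isomorphic over $\mathbb{C}$ to $y^2=4x^3-g_2(\Lambda)x-g_3(\Lambda)$ via $z\mapsto(\wp(z),\wp'(z))$, with $\omega,\omega'$ pulling back to $dz$. Write the minimal models in the form $y^2=4x^3-g_2x-g_3$ after the standard change of variables $x\mapsto x+r$, $y\mapsto 2y+a_1x+a_3$ with rational coefficients, which preserves the invariant differential. Then $(g_2,g_3)=(c_4/12,\,c_6/216)$ is determined by $\Lambda$, so the short Weierstrass forms of $E$ and $E'$ are literally equal. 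Composing the two rational coordinate changes gives $\phi$, defined over $\mathbb{Q}$, with $\phi^*\omega'=\omega$. Hence $\phi$ corresponds to the identity $z\mapsto z$ on $\mathbb{C}/\Lambda$, and the diagram commutes.
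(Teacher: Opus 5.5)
Your proposal is correct and follows the paper's proof. The component count fixes whether $\Lambda(E,\omega)$ is rectangular or rhombic, so $(c_{E/\mathbb{R}},\Omega_E^+,\Omega_E^-)$ pins down the lattice, which determines $c_4=12g_2(\Lambda)$ and $c_6=216g_3(\Lambda)$ and hence a common Weierstrass model over $\mathbb{Q}$. You also check two points the paper leaves implicit: that $\Omega^\pm$ generate $\Lambda\cap\mathbb{R}$ and $\Lambda\cap i\mathbb{R}$, and that $\phi^*\omega'=\omega$ forces the diagram to commute.

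One sentence in Step 1 is wrong as written: in the rhombic case $\tfrac12(\Omega_E^++\Omega_E^-)$ is itself a lattice point. The clean argument is that a real point $z$ satisfies $z-\bar z\in\Lambda\cap i\mathbb{R}=\mathbb{Z}\Omega_E^-$, and $\tfrac12\Omega_E^-\equiv-\tfrac12\Omega_E^+\pmod{\Lambda}$ already lies on the identity component.
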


\begin{proof}

If \(E(\mathbb R)\) is disconnected, equivalently \(c_{E/\mathbb R}=2\), then the
lattice is rectangular and we may choose the generators \(r^+\) and \(r^-\) so
that $\Lambda(E,\omega_E)
=
\mathbb Z \Omega_E^+ \oplus \mathbb Z \Omega_E^-$.
Here \(\Omega_E^+\in \mathbb R_{>0}\) and \(\Omega_E^-\in i\mathbb R_{>0}\), up to
the choice of sign of \(r^-\).

If \(E(\mathbb R)\) is connected, equivalently \(c_{E/\mathbb R}=1\), then the
lattice is non-rectangular and we may choose the generators so that $\Lambda(E,\omega_E)
=
\mathbb Z \Omega_E^+
\oplus
\mathbb Z\left(\frac{\Omega_E^+ + \Omega_E^-}{2}\right)$.
 Since $c_{E/\mathbb R}=c_{E'/\mathbb R}$, the curves \(E\) and \(E'\) have the same lattice.

The common period lattice \(\Lambda\) determines \(c_4=12g_2(\Lambda)\) and \(c_6=216g_3(\Lambda)\), and hence the Weierstrass equation $y^2=x^3-\frac{c_4}{48}x-\frac{c_6}{864}$. Therefore, \(E\) and \(E'\) are \(\mathbb Q\)-isomorphic.\end{proof}

\begin{theorem}[\cite{pal}, Theorem 3.2, Corollary 2.6 ]
Let \(E/\mathbb{Q}\) be an elliptic curve given by a global minimal
Weierstrass equation, and let \(E^D\) denote its quadratic twist by a
square-free integer \(D\). Then the BSD real periods of \(E\) and \(E^D\)
satisfy the following formula.

If \(D>0\), then
\[
  \Omega_{E^D}=\frac{\widetilde{u}}{\sqrt{D}}\,\Omega_E.
\]

If \(D<0\), then, up to sign,
\[
  \Omega_{E^D}
  =
  \frac{\widetilde{u}}{\sqrt{D}}\,
  c_{E^D/\Bbb{R}}\,\Omega^{-}_E,
\] where $c_\infty(E^D)$ is the number of connected components of
$E^D(\mathbb{R})$ and $\tilde{u}$ is a rational number such that
$2\tilde{u} \in \mathbb{Z}$; it depends on $E$ and $D$, and is
defined explicitly in Proposition 2.5 of \cite{pal}(the elliptic curve $E$ in
Proposition 2.5 should be taken as a minimal model of the $E$ in this
theorem).
By Corollary 2.6 of \cite{pal}, when $D \equiv 1 \bmod 4$, $(D, \Delta_{E})=1$, we have $\tilde{u}=1$.

\end{theorem}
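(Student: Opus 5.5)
The plan is to derive both formulas from the lattice relation between $E$ and its twist $E^D$, and then to compare Néron differentials by a local computation. Write $E$ in a global minimal model with Néron differential $\omega_E$. The twist $E^D\colon y^2=x^3+aDx^2+bD^2x+cD^3$ is isomorphic to $E$ over $\mathbb{Q}(\sqrt D)$ via $\psi\colon (x,y)\mapsto (Dx, D^{3/2}y)$. Under $\psi$, the standard differential $\omega_D=dx/(2y)$ of the displayed model of $E^D$ pulls back to $D^{-1/2}\omega_E$. Hence
\[
\Lambda(E^D,\omega_D)=D^{-1/2}\Lambda(E,\omega_E)
\]
by Proposition~\ref{delicate}.

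\textbf{Tracking complex conjugation.} For $D>0$, $\psi$ is defined over $\mathbb{R}$, so it commutes with complex conjugation and identifies $H_1^{+}$ with $H_1^{+}$. It also preserves component counts, which gives $\Omega^+_{E^D}(\omega_D)=D^{-1/2}\Omega^+_E$. For $D<0$, $\sqrt D$ is imaginary, so $\psi$ intertwines conjugation with minus conjugation. It therefore carries $H_1^-(E)$ onto $H_1^+(E^D)$. This gives $\Omega^+_{E^D}(\omega_D)=\pm D^{-1/2}\Omega^-_E$, which is real because $\Omega_E^-$ is purely imaginary. Lemma~\ref{plusminus} then turns these into the formulas for the full real period, with the factor $c_{E^D/\mathbb R}$ in the $D<0$ case. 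In the non-rectangular case the generator of $H_1^-$ must be the primitive vector $\Omega_E^-$ as defined (not $(\Omega^++\Omega^-)/2$); I will check this against the lattice description in Proposition~\ref{rectangler}.

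\textbf{Comparing differentials.} It remains to compare $\omega_D$ with the Néron differential $\omega_{E^D}$ of a minimal model of $E^D$. Write $\omega_{E^D}=\widetilde u^{-1}\omega_D$ up to sign, where $\widetilde u$ is the product over primes $p$ of the local scaling factors $p^{\pm k_p}$ needed to minimize the twisted model at $p$. This is purely local.
\begin{itemize}
\item For an odd prime $p\mid D$, the discriminant $D^6\Delta_E$ of the displayed model has $v_p=6$, since $p\nmid\Delta_E$. The curve $E$ has good reduction at $p$, so its twist has additive reduction of type $I_0^*$. A $p$-integral substitution cannot remove this, so the displayed model is minimal at $p$ and $k_p=0$.
\item At primes $p\nmid 2D$ nothing changes.
\item At $p=2$ the hypothesis $D\equiv 1\bmod 4$ makes $\mathbb{Q}_2(\sqrt D)/\mathbb{Q}_2$ unramified, so the twist stays minimal. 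One has to be careful here because the paper's twist model uses the coefficient $a$ of a short-ish form $y^2=x^3+ax^2+\cdots$ rather than a general $a_1,a_3$ model. One should instead twist the general minimal model via $c_4\mapsto D^2c_4$, $c_6\mapsto D^3c_6$ and use Kraus's conditions at $2$ and $3$.
\end{itemize}
Altogether this gives $\widetilde u=1$ in that case, and $2\widetilde u\in\mathbb Z$ in general, the factor $2$ coming from the passage between the $y^2=\cdots$ model and the $a_1,a_3$ model.

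\textbf{Main obstacle.} I expect the main difficulty to be the $2$-adic (and, if $3\mid D$, $3$-adic) minimality analysis in the general case. This is exactly why the paper cites Proposition~2.5 and Corollary~2.6 of \cite{pal} instead of redoing it. In practice I would prove the lattice formulas above in full and quote \cite{pal} for the value of $\widetilde u$, verifying only the clean case $D\equiv1\bmod4$, $\gcd(D,\Delta_E)=1$ by the argument just sketched.
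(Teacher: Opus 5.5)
\textbf{Verdict.} Your proposal is correct. The paper does not prove this statement; it quotes it from \cite{pal} (Theorem 3.2 and Corollary 2.6). The only part used later is the special case $\widetilde u=1$, in Theorem~\ref{recover}.

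\textbf{Comparison with the paper.} Your argument gives a self-contained derivation of what is actually needed. The $\mathbb{Q}(\sqrt D)$-isomorphism $\psi$ gives $\Lambda(E^D,\omega_D)=D^{-1/2}\Lambda(E,\omega_E)$. For $D<0$ one has $\overline{\psi(P)}=-\psi(\overline{P})$, so $\psi_*$ is an isomorphism $H_1^-(E)\to H_1^+(E^D)$. A generator therefore goes to a generator, and the index-two subtlety in the non-rectangular case never arises. Your reading of $\Omega_E^-$ as the primitive imaginary period, with $\Lambda=\mathbb{Z}\Omega_E^+\oplus\mathbb{Z}\frac{\Omega_E^++\Omega_E^-}{2}$, agrees with Proposition~\ref{rectangler}. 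Lemma~\ref{plusminus} applied to $E^D$ then supplies the factor $c_{E^D/\mathbb{R}}$.

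What your route buys over the citation is a proof of exactly the case the paper relies on. It does not establish the general value of $\widetilde u$, nor $2\widetilde u\in\mathbb{Z}$; your remark about $a_1,a_3$-models is only a heuristic. Those facts remain quoted from \cite{pal}, just as in the paper.

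\textbf{Two small corrections.}

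\emph{First, your normalization of $\widetilde u$ is inverted.} With $\omega_{E^D}=\widetilde u^{-1}\omega_D$ you would get $\Omega_{E^D}=\widetilde u^{-1}D^{-1/2}\Omega_E$, not the stated formula. A change of variables $x=u^2x'+r$ gives $\omega'=u\,\omega$, so the statement requires $\omega_{E^D}=\pm\widetilde u\,\omega_D$. For example, take $p\ge 5$ with $p\mid D$ and $E$ of reduction type $I_n^*$ at $p$. Then the twisted model has $v_p(c_4)\ge 4$, $v_p(c_6)\ge 6$ and $v_p(\Delta)\ge 12$, so it is non-minimal. Minimizing multiplies the differential by $p$, which puts a factor $p$ into $\widetilde u$. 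Under your convention, $2\widetilde u\in\mathbb{Z}$ would fail here.

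\emph{Second, the local checks in the clean case are easier than you expect.} For odd $p\mid D$, including $p=3$, the displayed model is $p$-integral with $v_p(\Delta)=6<12$. That alone forces minimality; the $I_0^*$ detour and the $3$-adic worry are unnecessary.

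At $p=2$ you do not need the displayed model to be $2$-integral. The discriminant attached to $(E^D,\omega_D)$ is $D^6\Delta_E$ regardless. Also $E\cong E^D$ over the unramified (or trivial) extension $\mathbb{Q}_2(\sqrt D)$, and minimal discriminant valuations are unchanged under unramified extension. Hence $v_2(\Delta_{\min}(E^D))=v_2(\Delta_E)=v_2(D^6\Delta_E)$, so the scaling between $\omega_D$ and the N\'eron differential is a $2$-adic unit.

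Your Kraus route also works. One has $D^3c_6\equiv c_6 \bmod 4$, and $8D^3\equiv 8\bmod 32$, exactly when $D\equiv 1\bmod 4$.
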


\begin{theorem}\label{recover}

Let $E/\mathbb{Q}$ be an elliptic curve, and let $D<0$ be a
square-free integer satisfying $$
  D \equiv 1 \bmod{4}
  \qquad\text{and}\qquad
  \gcd(D,\Delta_E)=1,
$$ where $\Delta_E$ denotes the minimal discriminant of $E$.
Then the following statements hold.
\begin{enumerate}
\item
The $\mathbb{Q}$-isomorphism class of $E$ is uniquely determined by the data

$$
  \left(
    c_{E/\mathbb{R}},
    \Omega_E,
    \Omega_{E^D}
  \right),
$$ where $c_{E/\mathbb{R}}$ denotes the number of connected components of
$E(\mathbb{R})$.

\item
There exists an integer $N=N(E,D)\geq 1$ such that the
$\mathbb{Q}$-isomorphism class of $E$ is uniquely determined by the data

$$
  \left(
    c_{E/\mathbb{R}},
    \Omega_E \text{ rounded to } N \text{ decimal places},
    \Omega_{E^D} \text{ rounded to } N \text{ decimal places}
  \right).
$$

Here, for $x>0$, the value of $x$ rounded to $N$ decimal places is $10^{-N}\left\lfloor 10^N x+\frac12\right\rfloor$.

\end{enumerate}
\end{theorem}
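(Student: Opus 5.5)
The plan for (1) is to recover the full pair $(\Omega_E^+,\Omega_E^-)$ from the data and then apply Proposition~\ref{rectangler}. Let $E'/\mathbb{Q}$ be another elliptic curve with $c_{E'/\mathbb R}=c_{E/\mathbb R}$, $\Omega_{E'}=\Omega_E$ and $\Omega_{E'^D}=\Omega_{E^D}$.

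\emph{Step 1.} By Theorem~\ref{periodmain} (with $K=\mathbb{Q}$), $E'$ is $\mathbb{Q}$-isogenous to $E$. Isogenous curves have the same conductor, so they have the same primes of bad reduction. Hence $\gcd(D,\Delta_{E'})=1$ as well, and the hypotheses of the twisting theorem of \cite{pal} apply to $E'$ with $\tilde u=1$.

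\emph{Step 2.} By Lemma~\ref{plusminus}, $\Omega_E^+=\Omega_E/c_{E/\mathbb R}$, and likewise for $E'$. So $\Omega_E^+=\Omega_{E'}^+$.

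\emph{Step 3.} The twist formula with $\tilde u=1$ gives $\Omega_{E^D}=\pm c_{E^D/\mathbb R}\,\Omega_E^-/\sqrt{D}$. Here $\Omega_E^-$ is purely imaginary, normalized by choosing $r^-$ with $\Omega_E^-\in i\mathbb{R}_{>0}$, which fixes the sign. We also need $c_{E^D/\mathbb R}=c_{E'^D/\mathbb R}$. Since $D<0$, $E^D$ is the twist by $\mathbb{C}/\mathbb{R}$ over $\mathbb{R}$, so the sign of the discriminant is preserved: $\Delta_{E^D}=D^6\Delta_E$. The number of real components is determined by the sign of the discriminant, so $c_{E^D/\mathbb R}=c_{E/\mathbb R}$, and similarly $c_{E'^D/\mathbb R}=c_{E'/\mathbb R}$. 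These two values agree by hypothesis. It follows that $\Omega_E^-=\Omega_{E'}^-$.

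\emph{Step 4.} Proposition~\ref{rectangler} now gives $E\cong_{\mathbb{Q}}E'$. Its proof only uses that the lattices coincide, so $g_2,g_3$, hence $c_4,c_6$, agree, and both curves are global minimal models over $\mathbb{Q}$.

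The main obstacle I expect is the sign and normalization bookkeeping in Step 3: making sure the ``up to sign'' in the twist formula is harmless, and that the same normalization of $\Omega^-$ is used for both curves. I would handle this by working with $|\Omega_{E^D}|$ and $|\Omega_E^-|$ throughout. A second point to watch is that $\tilde u=1$ genuinely requires only $\gcd(D,\Delta_{E'})=1$, which Step 1 supplies because isogenous curves share the same bad primes.

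For (2), I would combine (1) with Theorem~\ref{rounded} and the finiteness of isogeny classes. First, take $N$ large enough that agreement of $\Omega_{E'}$ with $\Omega_E$ to $N$ decimal places forces $|\Omega_E/\Omega_{E'}-1|<\varepsilon(E,1)$. This is possible because $\Omega_{E'}$ is then within $10^{-N}$ of $\Omega_E>0$. Theorem~\ref{rounded} then gives $\Omega_{E'}=\Omega_E$ exactly, and hence $E'\sim_{\mathbb{Q}}E$. The isogeny class of $E$ is finite, by Kenku's bound or by Lemma~\ref{lonely}. So there are only finitely many candidates $E'$, each satisfying $\gcd(D,\Delta_{E'})=1$ by Step 1. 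For each such $E'\not\cong E$ with matching $c$, part (1) shows $\Omega_{E'^D}\neq\Omega_{E^D}$. Let $\eta>0$ be the minimum of these finitely many differences. Enlarging $N$ so that $2\cdot 10^{-N}<\eta$ separates the rounded values of $\Omega_{E^D}$ and $\Omega_{E'^D}$. This choice of $N=N(E,D)$ proves (2).
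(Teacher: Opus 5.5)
Your proof of (1) is the paper's argument almost verbatim: isogeny from Theorem~\ref{periodmain}, shared bad primes so that $\tilde u=1$ also for $E'$, $\Omega^+$ from Lemma~\ref{plusminus}, $\Omega^-$ from the twist formula with $c_{E^D/\mathbb R}=c_{E/\mathbb R}$, then Proposition~\ref{rectangler}.

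For (2) your route differs slightly from the paper's. The paper applies Lemma~\ref{lonely} to both coordinates. Each of $\Omega_E$ and $\Omega_{E^D}$ is isolated in the set of all real periods of elliptic curves over $\mathbb{Q}$, and $E'^D$ is itself such a curve. So sufficiently fine roundings force $\Omega_{E'}=\Omega_E$ and $\Omega_{E'^D}=\Omega_{E^D}$ exactly, after which (1) applies.

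You use isolation only for $\Omega_E$, through Theorem~\ref{rounded} with $a=1$. This confines $E'$ to the finitely many curves isogenous to $E$ with $\Omega_{E'}=\Omega_E$. Then (1) together with finiteness gives a positive gap $\eta$ for the twisted periods.

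Both arguments are correct. The paper's is shorter and uses nothing beyond Lemma~\ref{lonely}. Yours needs finiteness of the relevant set of isogenous curves (Kenku, or the height bound in the proof of Lemma~\ref{lonely}). In exchange, the precision required for $\Omega_{E^D}$ comes from a comparison among finitely many explicit curves, rather than against all real periods over $\mathbb{Q}$.

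One thing to state explicitly: the minimum defining $\eta$ should run only over $E'\not\cong E$ with $\Omega_{E'}=\Omega_E$ and $c_{E'/\mathbb R}=c_{E/\mathbb R}$. That is exactly where (1) guarantees $\Omega_{E'^D}\neq\Omega_{E^D}$. Other members of the isogeny class with a different $\Omega$ could a priori share the twisted period, which would make $\eta$ zero.
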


\begin{proof}
(1) Suppose that $\Omega_E=\Omega_{E'}$, $\Omega_{E^D}=\Omega_{E'^D}$, and $c_{E/\mathbb{R}}=c_{E'/\mathbb{R}}$. Since $\Omega_E=\Omega_{E'}$, we have \[
c_{E/\mathbb{R}}\Omega_E^+
=
c_{E'/\mathbb{R}}\Omega_{E'}^+.
\]

Let \(D\) be an integer satisfying \(D\equiv 1 \bmod 4\),
\((D,\Delta_E)=1\), and \(D<0\). Note that $c_{E^D/\mathbb{R}}=c_{E/\mathbb{R}}$ for any square-free integer $D$.

By Theorem \ref{periodmain}, $E$ and $E'$ are $\mathbb{Q}$-isogenous, and hence have the same primes of bad reduction. Therefore $E$ and $E'$ have the same set of primes of bad reduction. Hence, if $(D,\Delta_E)=1$, then $D$ is also coprime to $\Delta_{E'}$, and so $(D,\Delta_E\Delta_{E'})=1$.

Thus, taking one \(D\) such that \(D\equiv 1 \bmod 4\), $(D, \Delta_E)=1$  and \(D<0\), Corollary 2.6 of \cite{pal} gives $c_{E^D/\mathbb{R}}\Omega_E^-
=
c_{E'^D/\mathbb{R}}\Omega_{E'}^-$, that is, $c_{E/\mathbb{R}}\Omega_E^-
=
c_{E'/\mathbb{R}}\Omega_{E'}^-$. Hence $\Omega_E^+={\Omega_E'^+}, \Omega_{E}^- = \Omega_{E'}^-$.

Thus, by Proposition \ref{rectangler}, we can conclude that $E$ and $E'$ are isomorphic over $\Bbb{Q}$. 

(2) By Lemma \ref{lonely}, $\Omega_E$ is determined by its value rounded to its $n(E)$-th decimal place, and $\Omega_{E^D}$ is determined by its value rounded to its $n(E,D)$-th decimal place. Thus, by taking $N(E,D)\ge \max{n(E),n(E,D)}$, we can say that $\Omega_E$ and $\Omega_{E^D}$ up to their $N(E,D)$-th decimal places determine the pair $(\Omega_E,\Omega_{E^D})$.\end{proof}

\begin{example}\label{twin2}[Distinguishing the BSD twin by negative quadratic twist]
Let \(m\) be either \(1\) or a prime satisfying $\left(\frac{390}{m}\right)=-1$,
and put \(n=m^2+64\). This construction provides a mechanism for producing many BSD twins \cite{shiga}.

Let \(E_m\) and \(E'_m\) be the elliptic curves
\[
E_m:\ y^2=x^3+390n x^2+195^2m^2n\,x,\qquad
E'_m:\ y^2=x^3-195n x^2+16\cdot 195^2n\,x.
\]

When \(m=1\), the pair \((E_1,E'_1)\) is a BSD twin with identical Kodaira symbols and minimal discriminants.

\FloatBarrier

{\small
\begin{table}[htbp]
\centering
\label{tab:elliptic_curves_38025_b}
\begin{tabular}{lll}
    \toprule
    \textbf{Elliptic curve} & $E_1: y^2=x^3+25350x^2+2471625x$ & $E'_1: y^2=x^3-50700x^2+632736000x$ \\
    \midrule
 j-invariant& $257^3$  & $17^3$  \\  
Minimal discriminant, $\Delta_E$ & $3^6\cdot 5^9\cdot 13^9 $ & $3^6\cdot 5^9\cdot 13^9 $ \\

    Mordell--Weil group & 
    $\mathbb{Z}/2\mathbb{Z}$ & $\mathbb{Z}/2\mathbb{Z}$ \\
    Regulator & 1 & 1 \\
  \rowcolor{gray!20}  Real period & $0.209\cdots$ & $0.209\cdots$ \\
  Tamagawa number & $2(p=3),2(p=5), 2(p=13)$ & $2(p=3), 2(p=5), 2(p=13)$\\

   Kodaira symbol  & $I_0^*(p=3), III^*(p=5), III^*(p=13)$,  & $I_0^*(p=3), III^*(p=5), III^*(p=13)$ \\

 $\Sha(E_i/\mathbb{Q})$ & same & same \\   
 
\rowcolor{gray!20}  $\Omega_{E^{-7}}$ & $0.029918\cdots$ & $0.059836\cdots$ \\

 \rowcolor{gray!20} $c_{E/\Bbb{R}}$ & 2 & 2 \\

    \bottomrule
\end{tabular}
\caption{The above pair cannot be distinguished by BSD invariants, Kodaira symbols, minimal discriminants. But they can be distinguished by the shaded invariants.}
\end{table}}

\FloatBarrier

In addition, since $\Delta_{E_1}=\Delta_{E'_1}>0$, we have $c_{E_1/\mathbb{R}}=c_{E'_1/\mathbb{R}}$. On the other hand, the real periods of the twists \(E_1^{-7}\) and \(E_1'^{-7}\) are distinct. Consequently, \(E_1/\mathbb{Q}\) is uniquely recovered from the data
\[
\left(\Omega_{E_1},c_{E_1/\Bbb{R}},\Omega_{E_1^{-7}}\right).
\]
\end{example}

\begin{remark}

Theorem \ref{recover} shows that an elliptic curve over $\mathbb{Q}$ can be recovered from data associated with three BSD invariants. However, no pair among these three invariants is sufficient in general.

Indeed, the pair $(\Omega_E,c_{E/\mathbb{R}})$ does not determine $E/\Bbb{Q}$, as shown by the pair $(E_1,E_1')$ in Example \ref{twin2}. Likewise, the pair $(\Omega_{E^D},c_{E/\mathbb{R}})$ does not determine $E$, as shown by $(E_1^{-11},(E_1')^{-11})$ and $D=-7$.

Finally, let $E:y^2=x^3+17x$
\text{and}
$E':y^2=x^3-68x$ be the pair in Example \ref{prop:j1728}. Then the pair $(\Omega_E,\Omega_{E^{-7}})$ does not determine $E/\Bbb{Q}$. Indeed, $\Omega_E=\Omega_{E'}
=
\frac{\Gamma(1/4)^2}{2\sqrt{\pi}\,17^{1/4}}$,
and $\Omega_{E^{-7}}=\Omega_{(E')^{-7}}
=
\frac{\Gamma(1/4)^2}{2\sqrt{7\pi}\,17^{1/4}}$. However, $c_{E/\Bbb{R}}=1, c_{E'/\Bbb{R}}=2$.

Thus, in general, an elliptic curve over $\mathbb{Q}$ can be recovered only from the full triple of invariants appearing in Theorem \ref{recover}, namely the two real periods and the Tamagawa number $c_{E/\mathbb{R}}$ at the archimedean place.

\end{remark}

\section{Acknowledgements}
I would like to express my sincere gratitude to Prof. Nobuo Tsuzuki for encouraging me to pursue the question studied in this paper, and for his constructive advice and comments in seminars. I am also deeply grateful to Prof. Christian Wüthrich for helpful discussions during my stay in Nottingham, especially on plus and minus periods, transcendence. I would also like to thank Rikuto Ito, Yoshinori Kanamura for their helpful comments.

\end{document}